\newtheorem{thm}{Theorem}
\newtheorem{lem}[thm]{Lemma}
\newtheorem{propn}[thm]{Proposition}
\newtheorem{rem}{Remark}
\numberwithin{equation}{section}
\title{Heat kernel fluctuations for a resistance form with non-uniform volume growth}
\author{D.A. Croydon\footnote{Dept of Statistics,
University of Warwick, Coventry CV4 7AL, UK;
{d.a.croydon@warwick.ac.uk}.}}
\date{2 February 2007}
\begin{document}
\maketitle

\begin{abstract}
In this article, we consider the problem of estimating the heat kernel on measure-metric spaces equipped with a resistance form. Such spaces admit a corresponding resistance metric that reflects the conductivity properties of the set. In this situation, it has been proved that when there is uniform polynomial volume growth with respect to the resistance metric the behaviour of the on-diagonal part of the heat kernel is completely determined by this rate of volume growth. However, recent results have shown that for certain random fractal sets, there are global and local (point-wise) fluctuations in the volume as $r\rightarrow 0$ and so these uniform results do not apply. Motivated by these examples, we present global and local on-diagonal heat kernel estimates when the volume growth is not uniform, and demonstrate that when the volume fluctuations are non-trivial, there will be non-trivial fluctuations of the same order (up to exponents) in the short-time heat kernel asymptotics. We also provide bounds for the off-diagonal part of the heat kernel. These results apply to deterministic and random self-similar fractals, and metric space dendrites (the topological analogues of graph trees).
\end{abstract}

\section{Introduction}

We start by introducing the general framework and notation that we will use throughout the article. Let $(X,d)$ be a locally compact, separable, connected metric space, and $\mu$ be a non-negative Borel measure on $X$, finite on compact sets and strictly positive on non-empty open sets. Assume that $(\mathcal{E},\mathcal{F})$ is a local, regular Dirichlet form on $L^2(X,\mu)$ and that there exists $\mathcal{F}'\supseteq\mathcal{F}$ such that $(\mathcal{E},\mathcal{F}')$ is a resistance form on $X$, (for an introduction to resistance forms, see \cite{Kigami}, Section 2.3). Resistance forms arise naturally from self-similar fractals and metric space dendrites, see \cite{Kigamidendrite} and \cite{Kigami} for examples and a precise definition.

Define the resistance function $R$ associated with $(\mathcal{E},\mathcal{F})$ by
\begin{equation}\label{resfunc}
R(A,B)^{-1}:=\inf\{\mathcal{E}(f,f):\:f\in\mathcal{F},f|_A=1,f|_B=0\},
\end{equation}
for disjoint subsets $A,B$ of $X$. If we set $R(x,y)=R(\{x\},\{y\})$, for $x\neq y$, and $R(x,x)=0$, then using the fact that $(\mathcal{E},\mathcal{F})$ is contained in a resistance form, it may be shown that the function $R:X\times X\rightarrow [0,\infty)$ is a metric on $X$. This metric is called the {\it resistance metric}, and we shall assume that the topology induced by $R$ is compatible with the topology induced by $d$. Note that, in the electrical network interpretation of a quadratic form, the right hand side of (\ref{resfunc}) is precisely the effective conductivity between the sets $A$ and $B$. Thus, the resistance function represents the effective resistance between sets. We shall denote by $B(x,r)$ the connected component of the resistance ball of radius $r$ around $x$ containing $x$.

Given a Dirichlet form, there is a natural way to associate it with a non-negative self-adjoint operator, $-\mathcal{L}$, which has a domain dense in $L^2(X,\mu)$ and satisfies
$$\mathcal{E}(f,g)=-\int_X f\mathcal{L}g d\mu,\hspace{20pt}\forall f\in \mathcal{F}, g\in \mathcal{D}(\mathcal{L}).$$
Through this association, we may define a related Markov process, $(X_t)_{t\geq0}$, with semi-group given by $P_t:=e^{t\mathcal{L}}$. Under the assumptions we have made so far, it may be proved that, because our Dirichlet form is local, our process is a diffusion and also, when $X$ is compact, $(P_t)_{t\geq 0}$ is Feller. Furthermore, we prove in Section \ref{existence} there exists a version of the transition density $p_t$, for each $t>0$, and it is this that will be the object of interest in this article. Apart from in Section \ref{existence}, we shall refer to it as the heat kernel or transition density interchangeably.

In the resistance form setting, it has been established that knowledge of the volume growth with respect to the resistance metric is an important factor in determining the behaviour of the heat kernel. One widely applicable way of describing volume growth is the idea of volume doubling. To introduce this, suppose that we have a strictly increasing function $V$, with $V(0)=0$, that satisfies the doubling condition:
\begin{equation}\label{doubling}
V(2r)\leq C_u V(r).
\end{equation}
We say our measure-metric space, $X$, has {\it uniform volume doubling} if we can find a function $V$ satisfying the above properties and also $C_1V(r)\leq V(x,r)\leq C_2 V(r)$, for every $x\in X$ and $r\in[0,R_X)$, where $V(x,r):=\mu(B(x,r))$, and $R_X$ is the diameter of $X$ with respect to the resistance metric, which may be infinite. This uniform volume growth condition includes any space with uniform polynomial volume growth, but excludes exponential growth.

In \cite{Kumagai}, for a measure-metric space satisfying the conditions of this article, Kumagai proves that uniform volume doubling implies that there exists a constant $T_X>0$ depending only on $(X,R)$ such that the following upper bound on the heat kernel holds: for $x,y\in X$, $t\in(0,T_X]$,
\begin{equation}\label{kumagairesult}
p_t(x,y)\leq \frac{C_3h^{-1}(t)}{t}e^{-\frac{R(x,y)}{C_3V^{-1}(t/R(x,y))}},
\end{equation}
where $h(r):=rV(r)$ occurs as a time scale function. It is also demonstrated that a near diagonal lower bound of the form
\begin{equation}\label{kumagairesultlower}
p_t(x,y)\geq \frac{C_4h^{-1}(t)}{t},\hspace{20pt}\mbox{for $h(C_4R(x,y))\leq t$},
\end{equation}
holds for $t\in(0,T_X]$. In particular, uniform volume doubling in the resistance metric determines that the on-diagonal part of the heat kernel is given up to constant multiples by $h^{-1}(t)/t$.

A major motivation for investigating the properties of the heat kernel on measure-metric spaces equipped with a resistance form is provided by fractal spaces. On many regular self-similar fractals, such as the Sierpinski gasket, the most convenient way to define a Laplacian on the set is via the construction of a self-similar resistance form, see \cite{Barlow}, \cite{Kigami}. Furthermore, the high degree of symmetry of these sets allows it to be deduced that uniform volume doubling holds, and thus, the results of \cite{Kumagai} immediately apply. However, this uniformity of volume growth has been shown not to be the case in the random fractal setting. In \cite{HamJon}, Hambly and Jones prove that for a class of random recursive fractals we can do no better than to bound the measures of balls by
$$C_{5} V(r)(\ln r^{-1})^{-a_1}\leq V(x,r)\leq C_6 V(r)(\ln r^{-1})^{a_2},$$
where $V(r)=r^\alpha$ and $C_{5}, C_6, a_1, a_2$ are strictly positive constants. Note that, although in \cite{HamJon} the volume growth is presented in terms of the original metric, it is straightforward to show that the same kinds of fluctuations occur when we consider the resistance metric balls. This uneveness, caused by the random construction mechanism, means that the uniform results do not apply. In fact, there is not even local (pointwise) volume doubling in this example. In \cite{HamKum}, Hambly and Kumagai show that the best possible upper and lower bounds for the on-diagonal part of the heat kernel on a random Sierpinski gasket are not asymptotically multiples of each other and also exhibit logarithmic fluctuations.

The main purpose of this article is to approach the problem of having non-uniform volume growth more generally. We make no assumptions on the specific structure of our measure-metric space and place only weak conditions on the fluctuations we use in the volume growth condition, see Section \ref{framework}. The argument we use follows closely that of Kumagai, \cite{Kumagai}, for the case of uniform volume doubling, although more work is required to deal with the fluctuations. As one would expect, by considering the problem in such generality, the results we get are not as sharp as those obtained in specific cases. However, we demonstrate that the loss of accuracy can only be the exponents of the correction terms. We shall discuss this further in Section \ref{examples} for some particular examples. The advantage of taking this approach is that we are able to deduce widely applicable bounds, and a particularly nice feature of the results we obtain is that the correction terms of the heat kernel bounds depend on the correction terms of the measure bounds in simple, explicit ways. For example, if we have logarithmic corrections to the measure, our results imply that there are no worse than logarithmic corrections to the heat kernel.

The estimation of heat kernels has of course been of interest in various other settings. Aronson, \cite{Aronson}, derived upper and lower bounds on the heat kernel for an elliptic operator in $\mathbb{R}^n$ and since then, the behaviour of the heat kernel for elliptic operators on Riemannian manifolds has been studied extensively, see \cite{Grig2} for an introduction to this area. Closely related to this, through discretisation techniques, is the estimation of heat kernels on graphs, where for these spaces, heat kernels are most easily thought of as the transition densities of the associated simple random walks. By considering a graph to be an electrical network, where each edge has resistance one, then we can define the resistance metric by taking $R(x,y)$ to be the effective resistance between vertices $x$ and $y$. In this case, if we have uniform volume doubling in the resistance metric, then suitable modifications of the results obtained by Kumagai for resistance forms allow it to be deduced that the on-diagonal part of the (discrete time) heat kernel behaves like $h^{-1}(n)/n$, for large $n$.

For a graph, it is not always straightforward to calculate the resistance between points, and the more natural distance to use is the shortest path length metric, $d$. Furthermore, the volume growth with respect to $d$ can sometimes be very different to that with respect to $R$. For example, on the integer lattice $\mathbb{Z}^2$, there is uniform volume doubling in the metric $d$, as the volume grows like $r^2$, whereas in the resistance metric, the volume grows exponentially in $r$. Because the distance $d$ is easier to calculate, there has been a great deal of effort put into establishing heat kernel estimates using knowledge of the volume growth with respect to $d$. As shown in \cite{BCG}, the information contained by the volume growth in $d$ is insufficient to characterise the heat kernel behaviour, and a range of outcomes is possible. However, for fractal-type graphs the resistance and shortest path metrics are often more closely linked, with some kind of power law between the two holding. In fact, when the volume growth is polynomial (in $d$), in \cite{BCK} it is shown that double-sided (sub-Gaussian) heat kernel estimates hold if and only if such a connection holds. The relationship between $d$ and $R$ is most obvious in the case of graph trees, where the two are in fact identical. Consequently, it is to fractal-type graphs and graph trees that the resistance form results are most easily adapted.

By analogy with the random recursive fractals of \cite{HamJon} and \cite{HamKum}, one might expect that the kind of uniform volume growth that holds for many fractal-type graphs does not hold when random variants are considered. In fact, this has already been proved in the case of the incipient infinite cluster of critical percolation on the binary tree, where local fluctuations of order $\ln\ln r$ about a leading order $r^2$ term occur in $V(x,r)$, see \cite{BarKum}. Note that, since this structure is a graph tree, this is the volume growth with respect to the resistance metric. In the same article, it was shown that these measure fluctuations lead to fluctuations of log-logarithmic order in the heat kernel, which mirrors the results of this article. As in the uniform volume doubling case, it should be a matter of making simple modifications to the techniques used here for resistance forms to exhibit fluctuation results for graphs more generally.

Of greater relevance to our situation are dendrites, which are the topological analogues of graph trees, with their defining properties being that they are arcwise-connected and contain no subset homeomorphic to the circle. For these sets, it was shown by Kigami, \cite{Kigamidendrite}, that any shortest path (additive along paths) metric, $d$, is in fact a resistance metric for some resistance form. Thus for these sets the volume growth in the original metric, $d$, and in the resistance metric, $R$, coincides. Using the simpler structure of these spaces, under uniform volume doubling, it is also possible to obtain a lower bound for the heat kernel of the same form as (\ref{kumagairesult}) with a different constant, see \cite{Kumagai}. Although the assumptions that make a space a dendrite are restrictive, there are many important examples, including the continuum random tree of Aldous, see \cite{Aldous2}. This is a random dendrite that arises naturally as the scaling limit of various families of random graph trees, and demonstrates measure fluctuations of the kind considered here, \cite{Croydoncrt}. For further discussion, see Section \ref{examples}.

The format of the article is as follows. In Section \ref{framework} we introduce the volume fluctuations that are considered in this article. In Section \ref{ondiagstate}, we state bounds for the on-diagonal part of the heat kernel, which depend only on the volume growth condition given at (\ref{volumegrowthcondition}). Furthermore, we show that when there actually are fluctuations in the measure of the kind described at (\ref{volumegrowthcondition}), there will also be spatial fluctuations in the heat kernel. In Section \ref{offdiagstate}, we state the bounds for the off-diagonal part of the heat kernel. To obtain the full bound we assume a chaining condition, which is also defined in this section. Section \ref{existence} is where the existence of the heat kernel is checked. Our main results are then proved in Sections \ref{ondiagsec} and \ref{offdiagonal}. Following this, in Section \ref{local}, we discuss the effect of having local fluctuations in the measure. Finally, in Section \ref{examples}, we discuss polynomial and logarithmic corrections to the volume growth function $V(r)=r^\alpha$. We shall also compare our on-diagonal results to those already established for random Sierpinski gaskets, and preview results for the continuum random tree. Constants of the form $c_.$ take values in $(0,\infty)$ and may take different values in different results.

\section{Volume fluctuations}\label{framework}

In this section, we make precise the volume growth condition that we shall presuppose for the remainder of the article. First, as in the introduction, let $V$ be a strictly increasing function, with $V(0)=0$, that satisfies the doubling condition of (\ref{doubling}). We will define $\beta_u:=\ln C_u /\ln 2$ to be the upper volume growth exponent, and continue to use the notation $h(r):=rV(r)$. Secondly, we assume that there exist functions $f_l, f_u:[0,R_X)\rightarrow [0,\infty]$ such that
\begin{equation}\label{volumegrowthcondition}
f_l(r)V(r)\leq V(x,r) \leq f_u(r)V(r),\hspace{20pt}\forall x\in X, r\in [0,R_X),
\end{equation}
where $V(x,r):=\mu(B(x,r))$, and $B(x,r)$ is the connected component of the resistance ball containing $x$, as in the introduction. Typically, we are considering the case when the volume growth is primarily determined by $V$ and the functions $f_l$ and $f_u$ are lower order fluctuations. This is formalised in the conditions given below on $f_l$ and $f_u$, although it is possibly more enlightening to refer to the examples in Section \ref{examples}. We will use the notation $V_l(r)$, $V_u(r)$ to represent $f_l(r)V(r)$, $f_u(r)V(r)$ respectively. Similarly, we define $h_l(r)=rV_l(r)$ and $h_u(r):=rV_u(r)$. The restrictions we make on $f_l$ and $f_u$ are the following:
\newcounter{listcount}
\begin{list}{(\roman{listcount})}{\usecounter{listcount} \setlength{\rightmargin}{\leftmargin}}
\item $f_l(r)^{-1},f_u(r)=O(r^{-\varepsilon})$, as $r\rightarrow 0$, for some $\varepsilon>0$.
\item $f_l(r)$ is increasing, $f_u(r)$ is decreasing.
\item $f_l(r)^{1/b},f_u(r)^{-1/b}$ are concave on $[0,r_0]$, for some $b, r_0>0$.
\end{list}
Here, $b$ and $\varepsilon$ are constants upon which we will place upper bounds in Sections \ref{ondiagstate} and \ref{offdiagstate}. Without loss of generality, by rescaling if necessary, we can assume further that $f_l\leq 1$ and $f_u\geq 1$. It turns out that the ratio of $f_l$ to $f_u$ is particularly useful in stating our main results, and we shall notate it as follows
$$g(r):=\frac{f_l(r)}{f_u(r)}.$$
By the assumptions on $f_l$ and $f_u$, we have that $g$ is increasing, $\leq1$ and $g(r)^{-1}=O(r^{-2\varepsilon})$ as $r\rightarrow 0$.

\section{Statement of on-diagonal results}\label{ondiagstate}

We are now ready to present our first results, which explain the behaviour of the on-diagonal part of the heat kernel when the volume growth of the previous section is assumed. The upper bound on the constants $b$ and $\varepsilon$, which appear in the conditions of the volume fluctuation functions $f_l$ and $f_u$, that we require is the following:
\begin{equation}\label{betaepscondition1}
b,\:\varepsilon<\frac{1}{4(2+\beta_u)}.
\end{equation}
We also define $\theta_1$ to be a constant that satisfies
\begin{equation}\label{theta1cond}
\theta_1>\frac{(3+2b+2\beta_u)(2+\beta_u)}{1-2b(3+2b+2\beta_u)}.
\end{equation}
This is an exponent that arises in the course of establishing the following on-diagonal heat kernel bounds, which are proved in Section \ref{ondiagsec} as Propositions \ref{ondiagupper} and \ref{ondiaglowerhk}.

\begin{thm}
There exist constants $t_0>0$ and $c_1, c_2, c_3$ such that
$$c_1\frac{h^{-1}(t)}{t}g(h^{-1}(t))^{\theta_1}\leq p_t(x,x)\leq c_2 \frac{h_l^{-1}(t)}{t}\leq c_3\frac{h^{-1}(t)}{t}f_l(h^{-1}(t))^{-1},$$
for all $x\in X, t\in (0,t_0)$. If $R_X=\infty$ then we may take $t_0=\infty$, otherwise $t_0$ is finite.
\end{thm}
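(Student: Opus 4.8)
The plan is to adapt Kumagai's proof of \eqref{kumagairesult} and \eqref{kumagairesultlower}, tracking how the fluctuation functions $f_l,f_u$ propagate through the argument, and to split the work into an upper-bound part and a lower-bound part. For the upper bound $p_t(x,x)\leq c_2 h_l^{-1}(t)/t$, I would follow the classical route via the Nash-type / on-diagonal estimate: one controls $\|P_t\|_{1\to\infty}$ by exploiting the resistance estimate $R(x,y)\geq$ (something like) $r$ for $y\notin B(x,r)$ together with the lower volume bound $V(x,r)\geq V_l(r)=f_l(r)V(r)$. Concretely, in the resistance-form setting one has the pointwise bound relating $p_t(x,x)$ to $\mu(B(x,r))$ and the "escape time" from $B(x,r)$, which is comparable to $r\mu(B(x,r))\geq h_l(r)$; optimising over $r$ (choosing $r=h_l^{-1}(t)$) yields $p_t(x,x)\leq c\, h_l^{-1}(t)/t$. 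The last inequality $h_l^{-1}(t)/t\leq c_3 h^{-1}(t)f_l(h^{-1}(t))^{-1}/t$ is then a purely elementary manipulation: since $h_l(r)=rf_l(r)V(r)$ and $f_l\leq 1$ is increasing, one shows $h^{-1}(t)\leq h_l^{-1}(t)\leq C h^{-1}(t) f_l(h^{-1}(t))^{-1}$ using the doubling of $V$ (controlled by $\beta_u$) and condition (i) on $f_l$ to guarantee that the distortion is at most a fixed power; conditions (ii) and (iii) ensure the relevant functions are nice enough (monotone, eventually concave after a power) for these comparisons to be uniform on $(0,t_0)$.

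For the lower bound $p_t(x,x)\geq c_1 h^{-1}(t)\,g(h^{-1}(t))^{\theta_1}/t$, I expect to need the near-diagonal lower bound machinery: first establish a Nash-type lower bound $p_t(x,x)\geq c/\mu(B(x,r_t))$ for an appropriate $r_t$, then propagate it off-diagonal a short distance using a chaining/Aronson-type iteration and Hölder continuity of $p_t(x,\cdot)$ obtained from the resistance metric (resistance forms give $|p_t(x,y)-p_t(x,z)|^2\leq R(y,z)\,\mathcal{E}(p_t(x,\cdot),p_t(x,\cdot))$, and $\mathcal{E}(p_t(x,\cdot),\cdot)$ is controlled by $\partial_t p_t$ hence by the on-diagonal bound). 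The fluctuation $g=f_l/f_u$ enters because the natural scale at which the lower Nash bound holds is governed by $V_u$ (upper volume, giving a lower bound on escape times is harder) while we want to express the answer in terms of $V$; the ratio of the "correct" radius to $h^{-1}(t)$ is then a power of $g$, and carefully bookkeeping the exponents through the iteration — each step of which costs a factor expressible via $f_l,f_u$ and the doubling exponent $\beta_u$ — produces the exponent $\theta_1$ from \eqref{theta1cond}. The constraint \eqref{betaepscondition1}, $b,\varepsilon<1/(4(2+\beta_u))$, is exactly what is needed to keep the denominator $1-2b(3+2b+2\beta_u)$ in \eqref{theta1cond} positive and to keep all the geometric-series-type sums in the iteration convergent, so I would verify that at the point where the iteration is set up.

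I would organise the proof as: (1) a lemma collecting the elementary comparisons between $h,h_l,h_u$ and their inverses, and between $f_l,f_u,g$ at comparable scales, using (i)--(iii) and doubling — this is routine but needs care to be uniform down to $0$ (or on all of $(0,\infty)$ when $R_X=\infty$); (2) the upper bound, essentially Kumagai's argument with $V$ replaced by $V_l$ in the volume input; (3) the near-diagonal lower bound, again following Kumagai but now with $V_u$ as the volume input and with the Hölder-continuity estimate used to transfer the bound over a ball of the right radius; (4) tracking exponents to extract $\theta_1$; and (5) handling $t_0$: when $R_X=\infty$ all estimates are scale-free enough (the conditions on $f_l,f_u$ are only imposed near $0$, but when $R_X=\infty$ one also needs them globally, or the statement implicitly assumes $V$-doubling globally — I would note that in this case $t_0=\infty$ follows because no cutoff at the diameter is needed), whereas for $R_X<\infty$ the bound is only claimed for small $t$ because for $t$ of order $h(R_X)$ the heat kernel is essentially constant $\approx\mu(X)^{-1}$ and the stated form degenerates. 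The main obstacle is step (4): getting an \emph{explicit} and \emph{clean} exponent $\theta_1$ out of the iteration, rather than an unspecified constant, requires being disciplined about which fluctuation function appears at each scale and not conflating $f_l$, $f_u$, and $g$ — this is where "more work is required to deal with the fluctuations" compared to the uniform case, since in Kumagai's setting these are all $\asymp 1$ and the bookkeeping is invisible.
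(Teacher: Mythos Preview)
Your upper-bound plan is correct and coincides with the paper's: the bound $p_t(x,x)\leq c\,h_l^{-1}(t)/t$ comes exactly from the Nash/ultracontractivity differential inequality with $V_l$ as the volume input, and the final comparison $h_l^{-1}(t)\leq c\,h^{-1}(t)f_l(h^{-1}(t))^{-1}$ is the elementary sandwich $h(f_l(r)r)\leq h_l(r)\leq h(r)$ you describe.

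The lower-bound plan, however, misidentifies the mechanism. H\"older continuity of $p_t(x,\cdot)$ and heat-kernel (Aronson) chaining are the tools for the \emph{off-diagonal} lower bound; they are not what produces the on-diagonal estimate, and in the paper they are only invoked \emph{after} the on-diagonal lower bound is in hand. The route to $p_t(x,x)\geq c\,h^{-1}(t)g(h^{-1}(t))^{\theta_1}/t$ is instead:
\begin{enumerate}
\item[(a)] a covering lemma: $B(x,r)$ is covered by at most $c\,g(r)^{-1}$ balls of radius $\varepsilon r$, by comparing $V_l$ and $V_u$;
\item[(b)] hence $R(x,B(x,r)^c)\geq c\,r\,g(r)^2$ --- this is where $g$ first enters, and it is the step you do not mention;
\item[(c)] via the Green kernel, $E^xT_{B(x,r)}\geq c\,h_l(r\,g(r)^2)$, while $E^yT_{B(x,r)}\leq h_u(r)$;
\item[(d)] the Markov property gives the linear bound $P^x(T_{B(x,r)}\leq t)\leq 1-c\,g(r)^{\gamma_1}+t/h_u(r)$ with $\gamma_1=3+2b+2\beta_u$, and a \emph{stopping-time} chaining (Barlow--Bass, not heat-kernel chaining) upgrades this to an exponential tail;
\item[(e)] Cauchy--Schwarz, $P^x(T_{B(x,r)}>t)^2\leq V_u(r)\,p_{2t}(x,x)$, and one chooses $r$ so that the tail probability is at most $1/2$.
\end{enumerate}
The exponent $\theta_1$ arises from steps (b)--(d), specifically from solving $t=r\,q(r\,g(r)^{\gamma_2})$ with $q(r)=c\,g(r)^{2\gamma_1}V_u(r)$; the denominator $1-2b(3+2b+2\beta_u)$ in \eqref{theta1cond} comes from inverting this relation using the concavity bound $g(\lambda r)\geq c\lambda^{2b}g(r)$, not from a heat-kernel iteration. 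Your proposal as written does not contain the resistance lower bound (a)--(b) or the exit-time tail (d), and without those you cannot make the choice of $r_t$ in ``$p_t(x,x)\geq c/\mu(B(x,r_t))$'' precise, nor extract $\theta_1$.
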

\begin{rem} The bound on the right hand side of this theorem is in general strictly worse than the bound involving $h_l^{-1}(t)$. However, we include it here because it demonstrates clearly that the type of fluctuations in the heat kernel are no worse than those in the measure.
\end{rem}

The next result shows that, if there actually are asymptotic fluctuations in the measure of the order of $f_l$ and $f_u$, then there will be spatial fluctuations in the heat kernel asymptotics.

\begin{thm} \label{fluctres}
If
\begin{equation}\label{infcond}
0<\liminf_{r\rightarrow0}\inf_{x\in X} \frac{V(x,r)}{V_l(r)}\leq \limsup_{r\rightarrow0}\inf_{x\in X} \frac{V(x,r)}{V_l(r)}<\infty,
\end{equation}
and
\begin{equation}\label{supcond}
0<\liminf_{r\rightarrow0}\sup_{x\in X} \frac{V(x,r)}{V_u(r)}\leq \limsup_{r\rightarrow0}\sup_{x\in X} \frac{V(x,r)}{V_u(r)}<\infty;
\end{equation}
then
\begin{equation}\label{infres}
0<\liminf_{t\rightarrow0}\inf_{x\in X} \frac{tp_t(x,x)}{h^{-1}(t)g(h^{-1}(t))^{\theta_1}},\hspace{10pt} \limsup_{t\rightarrow0}\inf_{x\in X} \frac{tp_t(x,x)}{h_u^{-1}(t)}<\infty,
\end{equation}
and
\begin{equation}\label{supres}
0<\liminf_{t\rightarrow0}\sup_{x\in X} \frac{tp_t(x,x)}{h_l^{-1}(t)}\leq\limsup_{t\rightarrow0}\sup_{x\in X} \frac{tp_t(x,x)}{h_l^{-1}(t)}<\infty.
\end{equation}
\end{thm}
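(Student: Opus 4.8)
The plan is to obtain the four bounds in \eqref{infres} and \eqref{supres} by combining the uniform two-sided estimate of the preceding theorem with the fact that, under \eqref{infcond} and \eqref{supcond}, every sufficiently small scale $r$ carries a point whose volume at that scale is comparable to $V_l(r)$ and another whose volume is comparable to $V_u(r)$. Two of the four bounds are then immediate: taking $\inf_{x}$ in the preceding theorem gives $\inf_{x}tp_t(x,x)\ge c_1h^{-1}(t)g(h^{-1}(t))^{\theta_1}$, the left-hand inequality of \eqref{infres}, and taking $\sup_{x}$ gives $\sup_{x}tp_t(x,x)\le c_2h_l^{-1}(t)$, the right-hand inequality of \eqref{supres}. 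So the work is in the two remaining statements, namely $\limsup_{t\to0}\inf_{x}tp_t(x,x)/h_u^{-1}(t)<\infty$ and $\liminf_{t\to0}\sup_{x}tp_t(x,x)/h_l^{-1}(t)>0$, each of which I would prove by estimating $p_t$ at a single, $t$-dependent point.

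For the $\limsup$ statement I would use the pointwise on-diagonal upper estimate that the proof of Proposition~\ref{ondiagupper} actually yields, namely $p_t(x,x)\le c/V(x,\rho)$ whenever $\rho V(x,\rho)=t$. Fix small $t$, set $r:=h_u^{-1}(t)$, and use the lower bound contained in \eqref{supcond} to pick $x_t$ with $V(x_t,r)\ge c'V_u(r)$. Since $V(x_t,\cdot)\le V_u$, the scale $\rho$ solving $\rho V(x_t,\rho)=t$ satisfies $\rho\ge r$, whence $p_t(x_t,x_t)\le c/V(x_t,\rho)\le c/V(x_t,r)\le (c/c')/V_u(h_u^{-1}(t))$, which is a fixed multiple of $h_u^{-1}(t)/t$. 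Hence $\inf_{x}tp_t(x,x)$ is at most a fixed multiple of $h_u^{-1}(t)$, uniformly over all small $t$.

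For the $\liminf$ statement I would run the usual resistance-form near-diagonal lower-bound argument at a near-minimal-volume point. Fix small $t$, set $r:=h_l^{-1}(ct)$ for an appropriate constant $c$, and use the upper bound contained in \eqref{infcond} to pick $x_t$ with $V(x_t,r)\le c'V_l(r)$. The standard exit-time estimates (cf.\ \cite{Kumagai} and the proof of Proposition~\ref{ondiaglowerhk}) give $c''\,rV(x_t,\lambda r)\le\mathbb{E}_{x_t}[\tau_{B(x_t,r)}]\le rV(x_t,r)$ together with $\sup_{y}\mathbb{E}_{y}[\tau_{B(x_t,r)}]\le 2rV(x_t,r)$, for a fixed $\lambda\in(0,1)$; by the concavity hypothesis~(iii) applied to $f_l^{1/b}$ one has $V_l(\lambda r)\ge\lambda^{b}c'''V_l(r)$, so, using $V(x_t,\lambda r)\ge V_l(\lambda r)$ and $V(x_t,r)\le c'V_l(r)$, both sides of the exit-time bound are comparable to $t$. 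A Markov-property argument then makes $\mathbb{P}_{x_t}(\tau_{B(x_t,r)}>c_0t)$ bounded below, and the Cauchy--Schwarz and Chapman--Kolmogorov inequality $p_{2s}(x,x)\ge\mathbb{P}_{x}(X_s\in B(x,r))^2/\mu(B(x,r))$ yields, at an $s$ comparable to $t$, $p_{2s}(x_t,x_t)\ge c/V(x_t,r)\ge c/(c'V_l(r))$, which is a fixed multiple of $h_l^{-1}(t)/t$. Taking $\sup_{x}$ and absorbing the constant time change gives a fixed positive lower bound for $\sup_{x}tp_t(x,x)/h_l^{-1}(t)$, uniformly over small $t$.

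The main obstacle is the $\liminf$ statement, for two related reasons. First, the resistance-form exit-time lower bound is naturally controlled by $V(x,\lambda r)$ rather than $V(x,r)$, and without pointwise volume doubling this is exactly where a power of $r$ could be lost; conditions (ii)--(iii) are the hypotheses that prevent this, since they force $V_l(\lambda r)\asymp V_l(r)$, $V_u(\lambda r)\asymp V_u(r)$ and $h_l^{-1}(\kappa t)\asymp h_l^{-1}(t)$ for fixed $\lambda,\kappa>0$, so that extremality of the volume at a single scale is already enough. Second, the two easy bounds hold for every small $t$, whereas the two new bounds are produced at points $x_t$ that vary with $t$, so one must take care to conclude statements about $\liminf$ and $\limsup$ as $t\to0$ rather than merely along a subsequence; this is handled by parametrising the construction by the scale $r$ and using the (mildly irregular but polynomially two-sided) monotonicity of $h_l$ and $h_u$ to pass between $h_l^{-1}(ct)$ and $h_l^{-1}(t)$.
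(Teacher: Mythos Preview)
Your overall strategy matches the paper's exactly: the two immediate bounds come from taking $\inf_x$ and $\sup_x$ in the preceding theorem; the $\limsup$ bound in \eqref{infres} comes from the pointwise upper estimate $p_{2rV(x,r)}(x,x)\le 2/V(x,r)$ (this is the paper's \eqref{trandensupper2}) evaluated at a near-maximal-volume point; and the $\liminf$ bound in \eqref{supres} comes from an exit-time/Markov/Cauchy--Schwarz argument at a near-minimal-volume point. The paper only runs this last step along a sequence $(x_n,r_n)$, which suffices for a $\liminf$; your parametrisation by every small $t$ also works.

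There is, however, a real gap in your $\liminf$ argument. You invoke as ``standard'' the exit-time lower bound $\mathbb E_{x_t}[\tau_{B(x_t,r)}]\ge c''\,r\,V(x_t,\lambda r)$ for a fixed $\lambda\in(0,1)$, but in the present fluctuating setting Proposition~\ref{expectedexittimes} only gives $\mathbb E_{x}[\tau_{B(x,r)}]\ge c\,h_l(rg(r)^2)$, which carries unwanted powers of $g(r)$. The reason is that the general resistance lower bound of Lemma~\ref{reslemma} is $R(x,B(x,r)^c)\ge c\,r\,g(r)^2$, not $c\,r$. To obtain what you need, one must first re-run the covering argument of Lemmas~\ref{coversize} and~\ref{reslemma} at the specific pair $(x_t,r)$, using the hypothesis $V(x_t,r)\le c'V_l(r)$: any disjoint family of balls of radius $\varepsilon r/2$ centred in a sub-ball of $B(x_t,r)$ has total mass at most $V(x_t,r)\le c'V_l(r)$, while each ball has mass at least $V_l(\varepsilon r/2)\ge c\,V_l(r)$; hence the cover size is bounded by a fixed constant independent of $r$, and Kumagai's argument then gives $R(x_t,B(x_t,r)^c)\ge c_4 r$. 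Only after this does the Green-kernel computation yield $\mathbb E_{x_t}[\tau_{B(x_t,r)}]\ge c\,h_l(r)$. The paper makes precisely this detour. Your discussion of the ``main obstacle'' correctly handles the volume factor $V(x,\lambda r)$ versus $V(x,r)$, but misses the resistance factor, which is actually where the powers of $g(r)$ would otherwise enter.
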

\begin{rem}
Note that we have non-trivial fluctuations in the measure if and only if $V_u(r)/V_l(r)\rightarrow \infty$ as $r\rightarrow 0$. This is equivalent to $h_l^{-1}(t)/h_u^{-1}(t)\rightarrow \infty$ as $t\rightarrow0$, which implies that there are non-trivial fluctuations in the heat kernel over space.
\end{rem}

\section{Statement of off-diagonal results}\label{offdiagstate}

To obtain the off-diagonal heat kernel bounds we shall assume again that we have volume growth bounded as at (\ref{volumegrowthcondition}). We also need two extra conditions and we introduce those now. We shall be slightly stricter about how the function $V(r)$ behaves for small $r$. We shall assume that there exist constants $R_X'>0$, $C_l>1$ such that
\begin{equation}\label{volumegrowthlower}
C_lV(r)\leq V(2r),\hspace{20pt}\forall r\leq R_X',
\end{equation}
and define $\beta_l:=\ln C_l/\ln 2$, the lower growth exponent. Comparing this to equation (\ref{doubling}) means that we must have $\beta_l\leq \beta_u$. This condition ensures that $V$ increases suitably quickly near 0, and is sometimes referred to in the literature as the anti-doubling property. We shall also tighten the conditions on $b$ and $\varepsilon$ to
\begin{equation}\label{betaepscondition2}
b,\:\varepsilon<\frac{\beta_l}{8(2+\beta_u)^2},
\end{equation}
and define $\theta_1$, $\theta_2$ and $\theta_3$ to be exponents satisfying
\begin{equation}\label{theta1cond2}
\frac{\beta_l}{2b}\wedge\frac{\beta_l}{2\varepsilon}>\theta_1>\frac{(3+2b+2\beta_u)(2+\beta_u)}{1-2b(3+2b+2\beta_u)},
\end{equation}
\begin{equation}\label{theta2cond}
\theta_2>\frac{\theta_1(1+\beta_l)}{\beta_l-2b\theta_1},
\end{equation}
$$\theta_3=(3+2b+2\beta_u)(1+2\beta_l^{-1}).$$
Note that our assumptions on $b$ and $\varepsilon$ at (\ref{betaepscondition2}) mean that it is indeed possible to choose $\theta_1$ satisfying (\ref{theta1cond2}). Furthermore, we can choose $\theta_1$ that is consistent with (\ref{theta1cond}) and (\ref{theta1cond2}), we have merely added an upper bound.

Under these assumptions, we are able to deduce the following result for the off-diagonal parts of the heat kernel. It is proved in Section \ref{offdiagonal} as Propositions \ref{offdiagupperprop} and \ref{offdiaglowerprop}. In the statement of the result we use the chaining condition $(CC)$, which is defined as follows: there exists a constant $c_1$ such that for all $x,y\in X$ and all $n\in \mathbb{N}$, there exists $\{x_0,x_1,\dots,x_n\}\subseteq X$ with $x_0=x$, $x_n=y$ such that
$$R(x_{i-1},x_i)\leq c_1 \frac{R(x,y)}{n},\hspace{20pt}\forall 1\leq i \leq n.$$When this assumption holds, the following bounds show that the exponential decay away from the diagonal differs from the uniform case by a factor that is of an order no greater than the measure fluctuations (up to exponents).

\begin{thm} \label{fullbounds}
There exist constants $t_0>0$ and $c_1, c_2$ such that
$$p_t(x,y)\leq c_1\frac{h^{-1}(t)}{t}f_l(h^{-1}(t))^{-1}e^{-c_{2} \frac{R}{V^{-1}(t/R)}g(V^{-1}(t/R))^{\theta_3}},$$
for all $x,y\in X$, $t\in(0,t_0)$, where $R:=R(x,y)$.

Furthermore, if $(CC)$ holds, then there exist constants $t_1>0$ and $c_3, c_4$ such that
$$p_t(x,y)\geq c_3\frac{h^{-1}(t)}{t}g(h^{-1}(t))^{\theta_1}e^{-c_{4} \frac{R}{V^{-1}(t/R)}g(V^{-1}(t/R))^{-\theta_2}},$$
for all $x,y\in X$, $t\in(0,t_1)$, where $R:=R(x,y)$.

Note that, if $R_X'=\infty$ then we may take $t_0=t_1=\infty$, otherwise $t_0$ and $t_1$ are finite.
\end{thm}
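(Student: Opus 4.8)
The plan is to follow the strategy that Kumagai used in \cite{Kumagai} for the uniform volume doubling case, but to carry the fluctuation functions $f_l,f_u$ and their ratio $g$ through every step, using conditions (i)--(iii) and the quantitative restrictions (\ref{betaepscondition2})--(\ref{theta2cond}) to control how the corrections accumulate. Both bounds are built on top of the on-diagonal estimates of Theorem~1 (equivalently Propositions~\ref{ondiagupper} and \ref{ondiaglowerhk}), so I would first record the on-diagonal upper bound $p_s(u,u)\leq c\,h_l^{-1}(s)/s$ and a near-diagonal lower bound of the form $p_s(u,v)\geq c\,h^{-1}(s)s^{-1}g(h^{-1}(s))^{\theta_1}$, valid for $u,v$ with $h(cR(u,v))\leq s$. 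The anti-doubling condition (\ref{volumegrowthlower}) will be used repeatedly to guarantee that $V$, and hence $V^{-1}$ and $h^{-1}$, are regular enough near $0$ for the various optimisations; it is also what forces $t_0,t_1$ to be finite once $R_X'<\infty$.

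For the \textbf{upper bound}, which does not require $(CC)$, I would argue via the diffusion $(X_t)_{t\geq0}$. The core estimate is a tail bound on exit times: for $z\in X$ and $r$ small,
$$\mathbb{P}_z\big(\,X\text{ leaves }B(z,r)\text{ before time }t\,\big)\leq c\exp\Big(-c\frac{r}{V^{-1}(t/r)}\,g(V^{-1}(t/r))^{\theta_3}\Big).$$
This is obtained by splitting the radius $[0,r]$ into $n$ concentric annuli, using the standard resistance-form lower bound that crossing an annulus of width $r/n$ takes expected time at least a constant times $(r/n)V_l(r/n)=h_l(r/n)$ (effective resistance times the volume that must be filled), and applying the exponential Chebyshev inequality to the sum of the $n$ crossing times, then optimising over $n$; conditions (i)--(iii) are what convert the resulting product of $n$ factors of the form $f_l(r/n)^{-1}$ into a single power $g(V^{-1}(t/r))^{\theta_3}$. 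This tail bound is then combined with the on-diagonal upper bound (together with the inequality $h_l^{-1}(s)/s\leq c\,h^{-1}(s)s^{-1}f_l(h^{-1}(s))^{-1}$) via the strong Markov property at the hitting time of $B(y,R/2)$, with some care taken over the near-diagonal part of the decomposition, exactly as in \cite{Kumagai}; this yields the claimed bound with $R=R(x,y)$.

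For the \textbf{lower bound} I would use the chaining condition $(CC)$. Given $x,y$, choose $n\in\mathbb{N}$ with $n$ comparable to $R/V^{-1}(t/R)$ and a chain $x=x_0,x_1,\dots,x_n=y$ with $R(x_{i-1},x_i)\leq c_1R/n$; then Chapman--Kolmogorov gives, with $z_0=x$, $z_n=y$ and $\rho$ a small multiple of $R/n$,
$$p_t(x,y)\geq\int_{B(x_1,\rho)}\!\!\cdots\!\int_{B(x_{n-1},\rho)}\prod_{i=1}^{n}p_{t/n}(z_{i-1},z_i)\,d\mu(z_1)\cdots d\mu(z_{n-1}).$$
The choice of $n$ makes $t/n$ comparable to $h(cR/n)$, so for each $i$ the near-diagonal lower bound applies and $p_{t/n}(z_{i-1},z_i)\geq c\,h^{-1}(t/n)(t/n)^{-1}g(h^{-1}(t/n))^{\theta_1}$, while each ball has volume at least $V_l(\rho)$. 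Multiplying the $n$ factors, the ``constant to the power $n$'' part produces the leading term $e^{-c_4R/V^{-1}(t/R)}$, one factor $g(h^{-1}(t))^{\theta_1}$ is retained at the prefactor, and the accumulated volume- and $g$-fluctuations --- again controlled using (i)--(iii) --- are absorbed into the exponential correction $g(V^{-1}(t/R))^{-\theta_2}$; condition (\ref{theta2cond}) is precisely what makes this absorption possible, and (\ref{theta1cond2}) guarantees an admissible choice of $\theta_1$.

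The main obstacle throughout is the bookkeeping of fluctuations inside these $n$-fold products, where $n$ itself depends on $t$ and $R$: one must show that expressions such as $\prod_{i=1}^{n}f_l(r/n)^{-1}$ are dominated, uniformly over the admissible range of $t$ and $R$, by a fixed power of $g$ or $f_l$ evaluated at a single scale ($V^{-1}(t/r)$ or $h^{-1}(t)$). This is exactly where the concavity of $f_l^{1/b}$ and $f_u^{-1/b}$ (condition (iii)) and the sub-polynomial bound $f_l(r)^{-1},f_u(r)=O(r^{-\varepsilon})$ (condition (i)) enter, and it is what dictates the quantitative restrictions on $b,\varepsilon$ and the exponents $\theta_1,\theta_2,\theta_3$ in (\ref{betaepscondition2})--(\ref{theta2cond}). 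A minor additional point is to arrange that $n$ can be chosen to be a positive integer while keeping $t/n$ in the regime where both the near-diagonal lower bound and the exit-time estimate hold, and to treat separately the cases $R_X'=\infty$ (where $t_0=t_1=\infty$) and $R_X'<\infty$; both are routine given the doubling and anti-doubling conditions.
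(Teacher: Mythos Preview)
Your overall architecture is the same as the paper's: exit-time tail bound plus on-diagonal upper bound for the upper estimate, and near-diagonal lower bound plus $(CC)$-chaining via Chapman--Kolmogorov for the lower estimate. However, in two places you have implicitly assumed the uniform-volume-doubling versions of intermediate estimates, and in the fluctuation setting these are not available; the $g$-corrections enter earlier than you indicate.

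First, for the exit-time bound you claim that crossing a ball of radius $r/n$ takes expected time at least $c\,h_l(r/n)$, i.e.\ effective resistance $\times$ volume with the resistance bounded below by $c\,r/n$. In this setting one only has $R(x,B(x,r)^c)\geq c\,r\,g(r)^2$ (Lemma~\ref{reslemma}), the $g(r)^2$ loss coming from the covering argument whose cover size is $c\,g(r)^{-1}$ rather than a constant. Consequently the exit-time lower bound is $E^xT_{B(x,r)}\geq c\,h_l(rg(r)^2)$, and after the stopping-time chaining the linear tail bound carries a factor $g(r/n)^{\gamma_1}$ with $\gamma_1=3+2b+2\beta_u$ rather than merely $g(r/n)$; this is the origin of $\theta_3=\gamma_1(1+2\beta_l^{-1})$, not a product of $f_l(r/n)^{-1}$ terms as you suggest.

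Second, your near-diagonal lower bound is stated to hold whenever $h(cR(u,v))\leq s$, i.e.\ $R(u,v)\leq c\,h^{-1}(s)$. But the modulus-of-continuity argument gives $p_s(u,v)\geq\tfrac12 p_s(u,u)$ only when $4R(u,v)\leq s\,p_s(u,u)$, and the on-diagonal lower bound here has the extra factor $g(h^{-1}(s))^{\theta_1}$; hence the correct near-diagonal region is $R(u,v)\leq c\,h^{-1}(s)\,g(h^{-1}(s))^{\theta_1}$ (Lemma~\ref{nle}). This forces the chain length $N$ to be defined by $R/N\leq c\,h^{-1}(t/N)g(h^{-1}(t/N))^{\theta_1}$ rather than simply $t/N\asymp h(cR/N)$, and $N$ is then not comparable to $R/V^{-1}(t/R)$ but to $R/V^{-1}(t/R)$ times a negative power of $g(V^{-1}(t/R))$. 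Tracking this extra power through the product is exactly what produces the exponent $\theta_2>\theta_1(1+\beta_l)/(\beta_l-2b\theta_1)$ in~(\ref{theta2cond}).
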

\begin{rem}
We note that the results of Sections \ref{ondiagstate} and \ref{offdiagstate} reduce to those obtained by Kumagai in \cite{Kumagai} when $f_l$ is bounded away from 0 and $f_u$ is bounded above by a finite constant. The extension of the near-diagonal lower bound of (\ref{kumagairesultlower}) is proved in Lemma \ref{nle}.
\end{rem}

\begin{rem}
Choosing $\theta_1$ and $\theta_2$ closer to the lower bound will give tighter bounds asymptotically.
\end{rem}

\begin{rem}
The chaining condition is not necessary to obtain the off-diagonal upper bound. However, as is remarked in \cite{Kumagai}, Section 5, by Kumagai, even for the case of uniform volume doubling, the bound is not optimal in general when $(CC)$ does not hold, which is often. We note that the chaining condition holds most obviously when $X$ is a dendrite.
\end{rem}

\section{Existence of the transition density}\label{existence}

In this section, we prove the existence of a transition density for $(P_t)_{t>0}$, using a result appearing in \cite{Grig}, by Grigor'yan. The key step is establishing the ultracontractivity of the semi-group in our setting. We shall start by defining this property and the other standard terms that will be used in this section.

A semi-group $(P_t)_{t>0}$ is said to be {\it ultracontractive}\index{ultracontractive} if there exists a positive, decreasing function $\gamma(t)$ on $(0,\infty)$ such that
\begin{equation}\label{contract}
\|P_t f\|_2\leq \gamma(t) \|f\|_1,\hspace{20pt}\forall f\in L^1(X,\mu)\cap L^2(X,\mu).
\end{equation}
This property is particularly appealing for a semi-group, and as we explain below, it immediately guarantees the existence of a transition density for $(\mathcal{E},\mathcal{F})$. In the resistance form setting, we show in Proposition \ref{mainyo} that the only condition needed to deduce ultracontractivity is a suitable uniform lower bound on the volume of resistance balls.

A family $(p_t)_{t>0}$ of $\mu\times\mu$-measurable functions on $X\times X$ is called a (symmetric) {\it transition density}\index{transition density} of the semi-group $(P_t)_{t>0}$ (alternatively, of the form $(\mathcal{E},\mathcal{F})$) if there exists $X'\subseteq X$ with $\mu (X\backslash X')=0$ such that, for any bounded measurable function $f$,
$$P_t f(x)=\int_X p_t(x,y) f(y)\mu(dy),\hspace{20pt} \forall x\in X', t>0,$$
$$p_t(x,y)=p_t(y,x)\hspace{20pt}\forall x,y\in X, t>0,$$
and
$$p_{s+t}(x,y)=\int_X p_s(x,z)p_t(z,y)\mu(dz),\hspace{20pt}\forall x,y\in X, s,t>0.$$
Similarly, a family $(\tilde{p}_t)_{t>0}$ of $\mu\times\mu$-measurable functions on $X\times X$ is called a {\it heat kernel}\index{heat kernel} of $(P_t)_{t>0}$ if $\tilde{p}_t$ is an integral kernel of $P_t$ for each $t>0$. Clearly, this only defines a heat kernel up to a $\mu$-null set. The extra conditions on the transition density mean that it is defined everywhere in $X$ and is also a heat kernel. Consequently, for an arbitrary heat kernel our results only apply $\mu$-almost everywhere.

Before we prove the existence of a transition density for $(\mathcal{E},\mathcal{F})$, we state the crucial lemma that we will apply, the proof of which relies on the Riesz representation theorem. It should be noted that the argument we use for our main result, Proposition \ref{mainyo}, is standard, and is similar to the proof of the heat kernel upper bound proved in \cite{Kumagai}, Proposition 4.1. In the proof, we will utilise the following observation that is straightforward to derive from the definition of $R$. In particular, we have that
\begin{equation}\label{resistance}
|f(x)-f(y)|^2\leq R(x,y)\mathcal{E}(f,f),\hspace{20pt}\forall x,y\in X, f\in\mathcal{F}.
\end{equation}
This inequality, together with the assumption that the topologies induced by $R$ and $d$ are compatible, means that $\mathcal{F}\subseteq C(X)$, where $C(X)$ is the space of continuous functions on $(X,d)$.

{\lem \label{inctd} {\rm (\cite{Grig}, Lemma 8.1)} If the semi-group $(P_t)_{t>0}$ is ultracontractive, then it admits a transition density.}

{\propn \label{mainyo} There exists a transition density $(p_t)_{t>0}$ for $(P_t)_{t>0}$, and moreover, for each $t>0$, $p_t(x,y)$ is jointly continuous in $x$ and $y$.}
\begin{proof} By rescaling, to demonstrate that $(P_t)_{t>0}$ is ultracontractive, it is sufficient to check that (\ref{contract}) holds for every $f\in L^1(X,\mu)\cap L^2(X,\mu)$ with $||f||_1=1$. Consequently, we take $f$ to be a function satisfying these conditions, and we denote $f_t:=P_tf$. By standard semi-group theory, we note that $f_t\in\mathcal{D}(\mathcal{L})\subseteq\mathcal{F}$ for every $t>0$, where $\mathcal{D}(\mathcal{L})$ is the domain of the generator of $(P_t)_{t> 0}$. Now observe that we must have, for every $x\in X$, $r,t>0$,
$$\int_{B(x,r)}|f_t(y)|\mu(dy)\leq ||f_t||_1\leq ||f||_1=1.$$
Hence, there must exist a $y\in B(x,r)$ such that $|f_t(y)|\leq V(x,r)^{-1}\leq V_l(r)^{-1}$, where we apply the volume bound of (\ref{volumegrowthcondition}) for the second inequality. Combining this result with the inequality that was stated at (\ref{resistance}), it is possible to deduce that
\begin{eqnarray*}
\frac{1}{2}|f_t(x)|^2&\leq &|f_t(y)|^2+|f_t(x)-f_t(y)|^2\\
&\leq& V_l(r)^{-2}+r\mathcal{E}(f_t,f_t).
\end{eqnarray*}
We now define $\psi(t):=||f_t||_2^2$, which is a positive decreasing function. The above inequality allows us to write
\begin{eqnarray*}
\psi(t/2)&=&\int_X f_{t/2}(x)f_{t/2}(x)\mu(dx)\\
&\leq&\int_X |f(x)f_{t}(x)|\mu(dx)\\
&\leq& 2^{1/2}(V_l(r)^{-2}+r\mathcal{E}(f_t,f_t))^{1/2},
\end{eqnarray*}
where for the final inequality we use the fact that $||f||_1=1$. Applying established results for semi-groups, we have that $\psi'(t)=-2\mathcal{E}(f_t,f_t)$. Thus, the above inequality may be rearranged to give
$$\psi'(t)\leq\frac {2V_l(r)^{-2}-\psi(t)^2}{r},$$
where we also apply the fact that $\psi(t)\leq\psi(t/2)$. By following the proof of \cite{Kumagai}, Proposition 4.1, we are able to deduce from this differential inequality the existence of constants $c_{1}, t_0>0$ such that
\begin{equation}\label{trandensupper}
\psi(t)\leq c_{1}\frac{h_l^{-1}(t)}{t},\hspace{20pt}\forall t\in(0,t_0),
\end{equation}
which implies that in (\ref{contract}) we may take $\gamma(t)=(c_{1}h_l^{-1}(t)/t)^{1/2}$ for $t\in (0,t_0)$. Hence, $(P_t)_{t>0}$ is ultracontractive, and so, by Lemma \ref{inctd}, it admits a transition density $(p_t)_{t>0}$.

To prove the continuity of $p_t$ for each $t>0$, we first observe that $p_t(x,\cdot)=P_{t/2}p_{t/2}(x,\cdot)$. This implies that $p_{t}(x,\cdot)\in\mathcal{D}(\mathcal{L})\subseteq\mathcal{F}$, and in particular we must have $\mathcal{E}(p_t(x,\cdot),p_t(x,\cdot))<\infty$. Consequently, we can apply the inequality at (\ref{resistance}) and the symmetry of the transition density to deduce the desired continuity result.
\end{proof}

\section{Proof of on-diagonal heat kernel bounds}\label{ondiagsec}

In this section we determine bounds for the on-diagonal part of the heat kernel. We start with the proof of the upper bound. As is often the case, this is relatively straightforward to obtain. It is the lower bound which requires more work and the rest of the section is dedicated to this. A result of interest in its own right is Proposition \ref{expectedexittimes}, where we present bounds for the expected time to exit a ball.

\begin{propn} \label{ondiagupper}
There exist constants $t_0>0$ and $c_{1}$ such that
$$p_t(x,x)\leq c_{1}\frac{h_l^{-1}(t)}{t}\leq c_{1}\frac{h^{-1}(t)}{t}f_l(h^{-1}(t))^{-1},\hspace{20pt}\forall x\in X, t\in(0,t_0).$$
If $R_X=\infty$, then we may take $t_0=\infty$, otherwise $t_0$ is finite.
\end{propn}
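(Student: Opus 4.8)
The plan is to convert the $L^1\to L^2$ bound on $(P_t)_{t>0}$ that was already obtained in the proof of Proposition \ref{mainyo} into an on-diagonal heat kernel bound, and then to recast it in the stated form using only elementary monotonicity properties of $V$ and $f_l$. Recall that (\ref{trandensupper}) asserts $\|P_sf\|_2^2\le c_1 h_l^{-1}(s)/s$ for every $f\in L^1(X,\mu)\cap L^2(X,\mu)$ with $\|f\|_1=1$ and every $s\in(0,t_0)$, with $t_0$ finite precisely when $R_X<\infty$.

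First I would establish that $p_{2t}(x,x)\le c_2 h_l^{-1}(t)/t$ for all $x\in X$ and $t\in(0,t_0)$. By the symmetry and Chapman--Kolmogorov identities, which hold at every point of $X$ by the definition of a transition density,
$$p_{2t}(x,x)=\int_X p_t(x,z)^2\,\mu(dz)=\|p_t(x,\cdot)\|_2^2,\qquad p_t(x,\cdot)=P_{t/2}\big(p_{t/2}(x,\cdot)\big).$$
Since $p_{t/2}(x,\cdot)\in\mathcal{F}\subseteq L^2(X,\mu)$ (as noted in the proof of Proposition \ref{mainyo}) and $\|p_{t/2}(x,\cdot)\|_1\le 1$ by sub-Markovianity, I would apply (\ref{trandensupper}) at time $t/2$ to the normalised function $f:=p_{t/2}(x,\cdot)/\|p_{t/2}(x,\cdot)\|_1$ (the case $p_{t/2}(x,\cdot)\equiv 0$ being trivial), obtaining $p_{2t}(x,x)=\|p_{t/2}(x,\cdot)\|_1^2\,\|P_{t/2}f\|_2^2\le c_1 h_l^{-1}(t/2)/(t/2)$. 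Relabelling $2t\mapsto t$, absorbing constants, and using that $h_l^{-1}$ is increasing (so $h_l^{-1}(t/4)\le h_l^{-1}(t)$) then gives $p_t(x,x)\le c_2 h_l^{-1}(t)/t$ on an interval $(0,t_0)$ of the same type. Alternatively one can run the usual duality argument $\|P_s\|_{1\to2}\le\gamma(s)\Rightarrow\|P_s\|_{2\to\infty}\le\gamma(s)\Rightarrow\|P_{2s}\|_{1\to\infty}\le\gamma(s)^2$ and upgrade the resulting $\mu\times\mu$-a.e.\ bound to a pointwise one via the joint continuity of $p_t$ from Proposition \ref{mainyo}.

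It then remains to verify $h_l^{-1}(t)\le h^{-1}(t)f_l(h^{-1}(t))^{-1}$, which yields the second inequality in the statement. Writing $s=h_l^{-1}(t)$ and $r=h^{-1}(t)$: since $f_l\le 1$ we have $h_l\le h$ and hence $s\ge r$; then $s f_l(s)V(s)=t=rV(r)$ together with $V(s)\ge V(r)$ forces $s f_l(s)\le r$, and as $f_l$ is increasing, $f_l(r)\le f_l(s)\le r/s$, which rearranges to the claim (with constant $1$). Finally, the statement about $t_0$ is inherited directly from (\ref{trandensupper}): when $R_X=\infty$ the volume bound (\ref{volumegrowthcondition}) is available for all radii and $t_0$ may be taken to be $\infty$; otherwise $t_0$ is finite.

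As for the main obstacle: there is essentially none of substance, which is precisely why this upper bound is ``relatively straightforward'' --- all of the analytic work is contained in (\ref{trandensupper}), and the only points needing mild care are the passage from an almost-everywhere to an everywhere statement (handled by continuity of $p_t$ together with $\mu$ having full support) and the bookkeeping of constants and the domain of validity under the time-rescaling $2t\mapsto t$.
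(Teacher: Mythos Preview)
Your proposal is correct and follows essentially the same route as the paper: the first inequality comes from $p_t(x,x)=\|p_{t/2}(x,\cdot)\|_2^2$ together with (\ref{trandensupper}), and the second from the elementary comparison of $h_l^{-1}$ with $h^{-1}/f_l\circ h^{-1}$ using only that $V$ is increasing and $f_l\le 1$ is increasing. The paper phrases the latter step via the sandwich $h(f_l(r)r)\le h_l(r)\le h(r)$ with $r=h_l^{-1}(t)$, which is exactly your argument with $s$ and $r$ interchanged.
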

\begin{proof} Since $p_t(x,x)=||p_{t/2}(x,\cdot)||_2^2$, we can use the upper bound at (\ref{trandensupper}) to deduce the first inequality. We now claim that
\begin{equation}\label{proofclaim}
h(f_l(r)r)\leq h_l(r)\leq h(r).
\end{equation}
Noting that $V$ is increasing and $f_l(r)\leq 1$ we must have
$$h(f_l(r)r)= f_l(r)rV(f_l(r)r)\leq f_l(r)rV(r)=h_l(r),$$
which is the left hand inequality. To prove the right hand inequality we simply note that $h_l(r)=rf_l(r)V(r)\leq rV(r)=h(r)$. Thus, the claim does indeed hold. If we now define $r$ by $t=h_l(r)$, we have from (\ref{proofclaim}) that $h(f_l(r)r)\leq t\leq h(r)$, and applying $h^{-1}$ to this yields
\begin{equation}\label{secondproofclaim}
f_l(r)h_l^{-1}(t)=f_l(r)r\leq h^{-1}(t)\leq  r.
\end{equation}
With this choice of $r$, the upper bound on the transition density given at (\ref{trandensupper}) transforms to
$$p_t(x,x)\leq c_{1}\frac{h_l^{-1}(t)}{t} \leq c_{1}\frac{h^{-1}(t)}{t}f_l(r)^{-1},$$
where we have applied the left hand inequality of (\ref{secondproofclaim}). To complete the proof we use the right hand inequality of (\ref{secondproofclaim}) to deduce that $f_l(h^{-1}(t))\leq f_l(r)$.
\end{proof}

The aim of the subsequent four lemmas is to deduce bounds on the effective resistance from the centre of a ball to its surface.  We start by proving two lemmas which explains how to move factors in and out of the functions $V$, $f_l$ and $f_u$, and will be used repeatedly later in the article. Next, Lemma \ref{coversize} is a version of the result proved in \cite{BarlowBass}, Lemma 2.7. We show how we can bound the size of a cover of a ball with suitably scaled smaller balls. The result of interest is easily deduced from this result, and appears as Lemma \ref{reslemma}.

{\lem \label{volumegrowthlemma}
Let $\Lambda\geq 1$, then $V(\Lambda r)\leq C_u \Lambda^{\beta_u}V(r)$.}
\begin{proof} Let $n=\lceil \ln \Lambda / \ln 2 \rceil$ and then, using the doubling property of $V$, (\ref{doubling}), we have $V(\Lambda r)\leq C_u^nV(2^{-n}\Lambda r) \leq C_u^{1+ \ln \Lambda / \ln 2} V(r)=C_u \Lambda^{\beta_u}V(r)$.
\end{proof}

\begin{lem} \label{concavebounds} There exist constants $c_{1}, c_{2}$ such that
$$f_l(\lambda r)\geq c_{1} \lambda^b f_l(r),\hspace{20pt}\forall \lambda\in[0,1],r\in[0,R_X),$$
$$f_u(\lambda r)\leq c_{2} \lambda^{-b} f_u(r),\hspace{20pt}\forall \lambda\in[0,1],r\in[0,R_X).$$
\end{lem}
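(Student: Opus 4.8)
The plan is to exploit condition (iii), which states that $f_l^{1/b}$ is concave on $[0,r_0]$, together with the monotonicity from condition (ii) and the non-negativity of $f_l$. The key observation is that a non-negative concave function $\phi$ on $[0,r_0]$ with $\phi(0)\geq 0$ is automatically superadditive in scaling, namely $\phi(\lambda r)\geq \lambda\phi(r)$ for $\lambda\in[0,1]$ and $r\in[0,r_0]$; this follows by writing $\lambda r = \lambda r + (1-\lambda)\cdot 0$ and applying concavity, using $\phi(0)\geq 0$. Applying this with $\phi = f_l^{1/b}$ gives $f_l(\lambda r)^{1/b}\geq \lambda f_l(r)^{1/b}$, i.e. $f_l(\lambda r)\geq \lambda^b f_l(r)$ on $[0,r_0]$, which is the desired inequality with constant $1$ on that range. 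Then I would handle $r\in[r_0,R_X)$ separately: for such $r$ and $\lambda\in[0,1]$, if $\lambda r\leq r_0$ the previous step applies directly once we compare $f_l(r)$ to $f_l(r_0)$, and if $\lambda r > r_0$ we use that $f_l$ is increasing and bounded (recall $f_l\leq 1$) so the ratio $f_l(\lambda r)/f_l(r)$ is bounded below by $f_l(r_0)>0$ while $\lambda^b\leq 1$; absorbing the finitely-controlled quantities into a single constant $c_1$ depending only on $f_l$, $b$ and $r_0$ completes this case.

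For the second inequality I would argue symmetrically using that $f_u^{-1/b}$ is concave on $[0,r_0]$. The same superadditivity-in-scaling argument yields $f_u(\lambda r)^{-1/b}\geq \lambda f_u(r)^{-1/b}$, hence $f_u(\lambda r)\leq \lambda^{-b} f_u(r)$ on $[0,r_0]$, and the extension to $[r_0,R_X)$ proceeds as before, now using that $f_u$ is decreasing and $f_u\geq 1$, together with the control $f_u(r)=O(r^{-\varepsilon})$ from condition (i) to ensure $f_u(r_0)$ is finite and the relevant ratios are bounded above, yielding the constant $c_2$.

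The only genuine subtlety — and the step I would be most careful about — is the passage from the range $[0,r_0]$ where concavity is assumed to the full range $[0,R_X)$; on $[0,r_0]$ the inequalities hold with constant $1$, and all that the constants $c_1, c_2$ are doing is absorbing the bounded, $r_0$-dependent discrepancy that can occur for $r$ near or above $r_0$. Here one must be slightly attentive to the case $R_X=\infty$: since $f_l$ is increasing and bounded above by $1$, and $f_u$ is decreasing and bounded below by $1$, both functions are genuinely bounded on $[r_0,\infty)$ (in fact $f_l$ has a finite positive limit and $f_u$ a finite limit $\geq 1$, using that $f_u = O(r^{-\varepsilon})$ forces $f_u$ to stay finite), so the required uniform bounds on the ratios hold with a single choice of $c_1, c_2$ even in the unbounded-diameter case. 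Everything else is a routine splitting of cases on whether $\lambda r$ lies in $[0,r_0]$ or not.
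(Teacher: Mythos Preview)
Your proposal is correct and follows essentially the same route as the paper: use concavity of $f_l^{1/b}$ (respectively $f_u^{-1/b}$) on $[0,r_0]$ to get the inequality with constant $1$ there, then extend to $r\in[r_0,R_X)$ using monotonicity together with the bounds $f_l\leq 1$, $f_u\geq 1$. The only cosmetic difference is that the paper avoids your case split on whether $\lambda r\leq r_0$ by observing directly that $f_l(\lambda r)\geq f_l(\lambda r_0)\geq \lambda^b f_l(r_0)\geq \tfrac{f_l(r_0)}{f_l(R_X)}\lambda^b f_l(r)$, where $f_l(R_X):=\lim_{r\uparrow R_X}f_l(r)\in(0,1]$.
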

\begin{proof}
We shall only prove the result for the $f_l$. The result for $f_u$ is proved by applying the same argument to $1/f_u$. By assumption, $f_l^{1/b}$ is concave and positive on $[0,r_0]$ and so, for $\lambda\in[0,1]$, $r\in[0,r_0]$,
$$f_l^{1/b}(\lambda r)\geq \lambda f_l^{1/b} (r)+(1-\lambda) f_l^{1/b}(0)\geq \lambda f_l^{1/b}(r).$$
Thus, we have the result for $r\in [0,r_0]$. Now, define $f_l(R_X):=\lim_{r\uparrow R_X} f_l(r)$, which exists in $(0,1]$ by the boundedness and monotonicity of $f_l$. We also have that $f_l(r)\leq f_l (R_X)$, for every $r\in[0,R_X)$. Hence, using the result already established for small $r$, we can deduce, $\forall \lambda\in[0,1],r\in[r_0, R_X)$, that
$$f_l(\lambda r)\geq f_l(\lambda r_0)\geq \frac{f_l(r_0)}{f_l(R_X)} \lambda^b f_l(r),$$
which completes the proof.
\end{proof}

\begin{lem} \label{coversize} Fix $\varepsilon\in(0,1/2]$. For any $r>0, x\in X$, we can find a cover of $B(x,r)$ consisting of fewer than $M$ balls of radius $\varepsilon r$, where
$$M:=c_{1} g(r)^{-1},$$
with $c_{1}$ a constant (depending on $\varepsilon$).
\end{lem}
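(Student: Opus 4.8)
The plan is to run the usual packing-and-covering argument (as in \cite{BarlowBass}, Lemma 2.7): produce the cover from a maximal $\varepsilon r$-separated net inside $B(x,r)$, bound the size of that net using the volume estimate (\ref{volumegrowthcondition}), and then convert the resulting ratio of volumes into a constant multiple of $g(r)^{-1}$ using the doubling property (\ref{doubling}) together with Lemmas \ref{volumegrowthlemma} and \ref{concavebounds}. Throughout we may assume that $2r<R_X$, the remaining cases (when $R_X$ is finite and $r$ is close to $R_X$) being handled by replacing $2r$ with $R_X$ in what follows.

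First I would fix a maximal set $\{x_1,\dots,x_N\}\subseteq B(x,r)$ of points pairwise at resistance distance at least $\varepsilon r$ apart. The connected-component balls $B(x_i,\varepsilon r/2)$ are then pairwise disjoint (by the separation of the $x_i$) and, since $\varepsilon\leq 1/2$, a short triangle-inequality argument shows each is contained in $B(x,2r)$; hence, by (\ref{volumegrowthcondition}),
\[
N\,V_l(\varepsilon r/2)\;\leq\;\sum_{i=1}^{N}\mu\big(B(x_i,\varepsilon r/2)\big)\;\leq\;\mu\big(B(x,2r)\big)\;\leq\;V_u(2r).
\]
In particular $N<\infty$, so a maximal such net exists, and by maximality every point of $B(x,r)$ lies within resistance distance $\varepsilon r$ of some $x_i$; exactly as in \cite{BarlowBass}, Lemma 2.7, it follows that the balls $B(x_i,\varepsilon r)$, $i=1,\dots,N$, cover $B(x,r)$, so it suffices to bound $N$ by $c_1 g(r)^{-1}$.

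To do this I would estimate $V_u(2r)/V_l(\varepsilon r/2)=\big(f_u(2r)/f_l(\varepsilon r/2)\big)\cdot\big(V(2r)/V(\varepsilon r/2)\big)$ factor by factor. Since $f_u$ is decreasing, $f_u(2r)\leq f_u(r)$; by Lemma \ref{concavebounds} applied with $\lambda=\varepsilon/2$, $f_l(\varepsilon r/2)$ is bounded below by a constant multiple (depending on $\varepsilon$ and $b$) of $f_l(r)$; and by Lemma \ref{volumegrowthlemma}, $V(2r)=V\big((4/\varepsilon)(\varepsilon r/2)\big)\leq C_u(4/\varepsilon)^{\beta_u}V(\varepsilon r/2)$. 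Multiplying these three bounds gives $N\leq c_1 f_u(r)/f_l(r)=c_1 g(r)^{-1}$ with $c_1$ depending only on $\varepsilon$, $C_u$, $\beta_u$ and $b$; inflating $c_1$ slightly upgrades this to the strict inequality in the statement.

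The computations here are all routine; the one step that is not a mechanical calculation is the verification that the maximal $\varepsilon r$-net produces a genuine cover of $B(x,r)$ by the connected-component balls $B(x_i,\varepsilon r)$ — this is transparent when $X$ is a dendrite, where resistance balls are automatically connected, and in general is the content of the cited Barlow--Bass lemma. I therefore expect the only mild obstacle to be this covering step, together with the minor bookkeeping needed at the $2r\geq R_X$ boundary.
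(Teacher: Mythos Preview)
Your argument is correct and is essentially the same packing--covering argument the paper uses; the only cosmetic differences are that the paper phrases the selection greedily (taking $x_{i+1}\in B(x,r)\setminus\bigcup_{j\leq i}B(x_j,\varepsilon r)$, so that the covering property is automatic upon termination) and uses the slightly tighter containing ball $B(x,r(1+\varepsilon/2))$ in place of your $B(x,2r)$, neither of which affects the outcome.
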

\begin{proof}
Let $x_1\in B(x,r)$ and choose $x_2,x_3,\dots$ by letting $x_{i+1}$ be any point in $B(x,r)\backslash\cup_{j=1}^i B(x_j,\varepsilon r)$. We do this until we can no longer proceed. Note that we must have the $B(x_i, \varepsilon r/2)$ disjoint and also $\cup_{i=0}^m B(x_i, \varepsilon r/2)\subseteq B(x,r(1+\varepsilon/2))$, where $m$ is the number of balls selected for the cover. It follows that
\begin{eqnarray*}
m V_l (\varepsilon r/2)&\leq&\mu \left(\bigcup_{i=0}^m B(x_i, \varepsilon r/2)\right)\\
&\leq&\mu\left(B(x,r(1+\varepsilon/2))\right)\\
&\leq& V_u (r(1+\varepsilon/2)).
\end{eqnarray*}
Now, by applying Lemma \ref{volumegrowthlemma} and Lemma \ref{concavebounds}, we have that $V_l (\varepsilon r/2) \geq c_{2} V_l(r)$, and also that $V_u(r(1+\varepsilon/2))\leq c_3 V_u(r)$. Hence, we must have $m\leq c_{1} f_u(r)f_l(r)^{-1}=c_{1} g(r)^{-1}$, and so the assertion is proved.
\end{proof}

\begin{lem} \label{reslemma} There is a constant $c_{1}$ such that, for all $r\in [0,R_X/2), x\in X$,
$$c_{1} rg(r)^2\leq R(x,B(x,r)^c)\leq r.$$\end{lem}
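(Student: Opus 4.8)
The plan is to prove the two inequalities separately. The upper bound is immediate from the monotonicity of effective resistance, while the lower bound is the substantive part, and I would obtain it from the covering estimate of Lemma \ref{coversize} by the method of \cite{BarlowBass}, Lemma 2.7. For the upper bound, first observe that since $r<R_X/2$ and $R_X$ is the diameter of $(X,R)$, we have $\sup_{y\in X}R(x,y)\geq R_X/2>r$, so $B(x,r)\neq X$. Being a non-empty proper subset of the connected space $X$ whose interior contains $x$, $B(x,r)$ has non-empty topological boundary, and since $z\mapsto R(x,z)$ is continuous with $B(x,r)\subseteq\{z:R(x,z)<r\}$, any $v\in\partial B(x,r)$ satisfies both $v\in B(x,r)^c$ and $R(x,v)\leq r$. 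Because enlarging the target set can only decrease the effective resistance from $x$ to it, $R(x,B(x,r)^c)\leq R(x,v)\leq r$.

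For the lower bound I would first reduce to an annulus: again by monotonicity of effective resistance (now enlarging the first set from $\{x\}$ to $\overline{B(x,r/2)}$, which is disjoint from $B(x,r)^c$),
\[
R(x,B(x,r)^c)\geq R\big(\overline{B(x,r/2)},B(x,r)^c\big).
\]
Applying Lemma \ref{coversize} with a fixed small parameter, cover $B(x,r)$ by $M\leq c\,g(r)^{-1}$ balls of radius $\varepsilon r$; in particular the annulus $B(x,r)\setminus\overline{B(x,r/2)}$ is covered by at most $M$ such balls. Following \cite{BarlowBass}, Lemma 2.7, I would organise these balls into ``layers'' according to their graph distance, in the intersection graph of the cover, from the ball meeting $\overline{B(x,r/2)}$, and use this layered cover to build a voltage function equal to $1$ on $\overline{B(x,r/2)}$ and $0$ on $B(x,r)^c$ whose Dirichlet energy is controlled, ball by ball, using the resistance inequality (\ref{resistance}). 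This produces a bound of the form $R(\overline{B(x,r/2)},B(x,r)^c)\geq c\,r\,M^{-2}$, and combining it with $M\leq c\,g(r)^{-1}$ gives $R(x,B(x,r)^c)\geq c_1\,r\,g(r)^2$. Lemmas \ref{volumegrowthlemma} and \ref{concavebounds} enter only to absorb the $\varepsilon$-dependent constants coming from Lemma \ref{coversize} and, wherever the argument needs to compare $g$ at scales $\varepsilon r$ and $r$, to pass between them at the cost of fixed powers.

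The main obstacle is exactly this covering-to-resistance step: manufacturing an admissible voltage function purely from a ball cover and bounding its energy from above in the general resistance-form setting, where, in contrast to the graph case, there is no edge structure to work with directly. This is also where one must be careful about exponents — the quadratic loss in $M$ (hence the $g(r)^2$ rather than $g(r)^1$) and, downstream, the exponent $\theta_1$, both originate here. By contrast, once Lemma \ref{reslemma} is available, propagating it to the expected exit time bounds of Proposition \ref{expectedexittimes} and then to the on-diagonal heat kernel lower bound should be comparatively routine, following \cite{Kumagai}.
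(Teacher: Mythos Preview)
Your plan coincides with the paper's: both use the cover of Lemma~\ref{coversize} with $M\leq c\,g(r)^{-1}$ balls and then invoke an external covering-to-resistance argument to obtain $R(x,B(x,r)^c)\geq c\,r\,M^{-2}\geq c_1 r g(r)^2$, while the upper bound $\leq r$ is immediate. One correction: the argument you want for the lower bound is \cite{Kumagai}, Lemma~4.1, not \cite{BarlowBass}, Lemma~2.7 --- the latter is the source for the \emph{covering} statement (Lemma~\ref{coversize}), not for passing from a cover to a resistance lower bound; also note that the inequality (\ref{resistance}) only \emph{lower}-bounds energy in terms of oscillation, so it cannot by itself upper-bound the Dirichlet energy of a constructed test function, which is why one really does need to import Kumagai's argument rather than improvise from (\ref{resistance}) alone.
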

\begin{proof} This result may be proved by repeating exactly the same argument as was used in \cite{Kumagai}, Lemma 4.1, with the cover size being determined by Lemma \ref{coversize}.
\end{proof}

We shall now prove bounds on the expected exit time of a resistance ball. For $A\subseteq X$, we shall define
$$T_A:=\inf\{t\geq 0:\:X_t\not\in A\}$$
to be the first exit time from $A$.

\begin{propn} \label{expectedexittimes} There exists a constant $c_{1}$ such that
$$E^{x_0}T_{B(x_0,r)}\geq c_{1} h_l(rg(r)^2),\hspace{20pt}\forall x_0\in X,r\in[0,R_X/2)$$
$$E^{x}T_{B(x_0,r)}\leq  h_u(r),\hspace{20pt}\forall x,x_0\in X,r\in[0,R_X/2).$$
\end{propn}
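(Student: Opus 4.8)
The plan is to obtain both bounds by relating the expected exit time from a resistance ball to the effective resistance across the ball and the volume of the ball, using the general machinery for resistance forms (in particular the formula that ties the Green function for the process killed on exiting a set to the resistance), and then applying Lemma \ref{reslemma} and the volume growth condition (\ref{volumegrowthcondition}). This is the same strategy as in \cite{Kumagai}, Proposition 4.2, adapted to the non-uniform setting. Write $B:=B(x_0,r)$ and let $g_B(x,y)$ denote the Green function of the process killed on exiting $B$, so that $E^xT_B=\int_B g_B(x,y)\,\mu(dy)$.

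For the \emph{upper bound}, the key input is the pointwise estimate $g_B(x,y)\le g_B(x_0,x_0)=R(x_0,B^c)$ for all $x,y\in B$, which follows from the standard fact that for a resistance form the Green function satisfies $g_B(x,x)=R(x,B^c)$ together with $g_B(x,y)\le g_B(x,x)$ and the maximum principle (so the centre gives the largest value, as the resistance from the centre to $B^c$ dominates). Then
$$E^xT_B=\int_B g_B(x,y)\,\mu(dy)\le R(x_0,B^c)\,\mu(B)\le r\cdot V(x_0,r)\le r V_u(r)=h_u(r),$$
using the right-hand inequality of Lemma \ref{reslemma} and (\ref{volumegrowthcondition}); note this holds for \emph{every} starting point $x$, as claimed.

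For the \emph{lower bound}, I would restrict attention to the smaller ball $B':=B(x_0, rg(r)^2)$ (up to the constant from Lemma \ref{reslemma}). Starting from the centre $x_0$, the process must exit $B'$ before it exits $B$, and on the region $B'$ the Green function $g_B(x_0,\cdot)$ is bounded below: a harmonicity/maximum-principle argument gives $g_B(x_0,y)\ge R(x_0, (B')^c)\ge c\, rg(r)^2$ for $y\in B'$ (the Green function of the bigger ball restricted to the smaller ball is at least the resistance across the smaller ball, since $g_B(x_0,\cdot)$ is superharmonic and equals its minimum over $\partial B'$ there). Hence
$$E^{x_0}T_B\ge \int_{B'} g_B(x_0,y)\,\mu(dy)\ge c\,rg(r)^2\,\mu(B')\ge c\,rg(r)^2\,V_l(rg(r)^2)=c\,h_l(rg(r)^2),$$
using (\ref{volumegrowthcondition}) for the volume of $B'$. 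Some care is needed to absorb the constant $c_1$ from Lemma \ref{reslemma} inside the radius of $B'$ and to invoke Lemma \ref{concavebounds}/Lemma \ref{volumegrowthlemma} to control $V_l(c_1 rg(r)^2)$ by a constant times $V_l(rg(r)^2)$, but these are routine.

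The main obstacle is making the Green-function-to-resistance identifications rigorous in the present generality — in particular justifying $g_B(x_0,x_0)=R(x_0,B^c)$ and the superharmonicity bounds for the killed process on a merely locally compact resistance form (rather than a nice graph), and checking that $B$, being the connected component of the resistance ball, is an admissible set for this theory. I would handle this by quoting the relevant Green-function results for resistance forms (e.g. from \cite{Kigami}) exactly as \cite{Kumagai} does, so that the only genuinely new content is the bookkeeping of the fluctuation functions $f_l,f_u,g$ via Lemmas \ref{volumegrowthlemma}, \ref{concavebounds} and \ref{reslemma}.
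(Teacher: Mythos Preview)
Your overall strategy---express $E^x T_B$ via the killed Green function, relate $g_B$ to effective resistance, then plug in Lemma~\ref{reslemma} and (\ref{volumegrowthcondition})---is exactly the paper's. However, both of the pointwise Green-function bounds you invoke are wrong as stated.

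For the upper bound, the claim $g_B(x,y)\le g_B(x_0,x_0)$ for all $x,y\in B$, equivalently $R(x,B^c)\le R(x_0,B^c)$, is false: the centre of a resistance ball need not be furthest from the complement. On $X=[0,3]$ with $x_0=1$ and $r=1.4$ one has $B=[0,2.4)$, $R(x_0,B^c)=1.4$, but $R(0,B^c)=2.4$. The paper does not argue this step at all; it simply cites \cite{Kumagai}, Proposition~4.2, for the inequality $E^xT_B\le r\,V(x_0,r)$, so whatever argument is required there, it is not the centre-maximises claim.

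For the lower bound, your claim $g_B(x_0,y)\ge R(x_0,(B')^c)$ for $y\in B'$ also fails in general: on $X=\mathbb{R}$ with $B=(0,2)$, $x_0=1$, $B'=(0.1,1.9)$, one computes $R(x_0,(B')^c)=0.45$ but $g_B(1,0.2)=0.1$. Superharmonicity does give $\min_{\overline{B'}}g_B(x_0,\cdot)=\min_{\partial B'}g_B(x_0,\cdot)$, but nothing identifies that boundary minimum with $R(x_0,(B')^c)$. The paper's argument avoids any maximum principle: it applies the resistance inequality (\ref{resistance}) directly to $g_B(x_0,\cdot)$, together with $\mathcal{E}(g_B(x_0,\cdot),g_B(x_0,\cdot))=g_B(x_0,x_0)$, to obtain
\[
\left(1-\frac{g_B(x_0,y)}{g_B(x_0,x_0)}\right)^2\le\frac{R(x_0,y)}{R(x_0,B^c)},
\]
so that $g_B(x_0,y)\ge\tfrac12 g_B(x_0,x_0)\ge c\,rg(r)^2$ whenever $y\in B(x_0,c_2 rg(r)^2)$ for a suitable constant $c_2$. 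Integrating over this smaller ball then gives the claimed lower bound, exactly along the lines you sketch.
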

\begin{proof} Fix $x_0\in X, r\in[0,R_X/2)$ and let $B:=B(x_0,r)$. Then, as in \cite{Kumagai}, Proposition 4.2, it may be deduced that there exists a green kernel $g_B(\cdot,\cdot)$ for the process killed on exiting $B$ that satisfies
\begin{equation}\label{green1}
\mathcal{E}(g_B(x,\cdot),g_B(x,\cdot))=g_B(x,x),\hspace{20pt}
\end{equation}
\begin{equation}\label{green2}
g_B(x,x)=R(x,B^c),\hspace{20pt}
\end{equation}
\begin{equation}\label{green3}
g_B(x,y)\leq g_B(x,x),\hspace{20pt}
\end{equation}
\begin{equation}\label{green4}
E^{x}T_{B}=\int_B g_B(x,y)\mu(dy),
\end{equation}
for all $x,y\in X$.

By the inequality at (\ref{resistance}) for the function $g_B(x_0,\cdot)$, one has that
$$|g_B(x_0,y)-g_B(x_0,x_0)|^2\leq R(x_0,y)\mathcal{E}(g_B(x_0,\cdot),g_B(x_0,\cdot)).$$
By using properties (\ref{green1}) and (\ref{green2}) it follows that
$$\left(1-\frac{g_B(x_0,y)}{g_B(x_0,x_0)}\right)^2\leq\frac{R(x_0,y)}{R(x_0,B^c)}.$$
Using (\ref{green3}) and the lower bound on $R(x,B(x_0,r)^c)$ obtained in Lemma \ref{reslemma}, it may be deduced from the above inequality that for some constant $c_2$, if $y\in B(x_0,c_{2}rg(r)^2)$, then $g_B(x_0,y)\geq \frac{1}{2}g_B(x_0,x_0)$. So, by the representation of $E^{x_0}T_B$ given at (\ref{green4}), we have
\begin{eqnarray*}
E^{x_0}T_{B(x_0,r)}&\geq& \frac{1}{2}R(x_0,B^c)V(x_0,c_{2}rg(r)^2)\\
&\geq &\frac{1}{2}c_{2}rg(r)^2 V_l(c_{2} rg(r)^2)\\
&\geq & c_{3} h_l(rg(r)^2),
\end{eqnarray*}
which proves the lower bound. For the upper bound, we proceed as in \cite{Kumagai}, Proposition 4.2, to obtain for $x\in X$, $E^xT_{B(x_0,r)}\leq rV(x_0,r)$, which immediately implies the result by the volume bounds at (\ref{volumegrowthcondition}).
\end{proof}

We now present a bound on the tail of the exit time distribution which will be sufficient for obtaining the on-diagonal lower bound for the heat kernel. The extra assumption we make on the volume growth for the off-diagonal bounds will also allow us to write this bound in a way that avoids using the rather awkward function $q$. This bound is presented in Proposition \ref{tailprobnice}.

\begin{lem} \label{tailprob} There exist constants $c_{1}, c_{2}, c_{q}$ such that
$$P^x(T_{B(x,r)}\leq t)\leq c_{1}e^{-c_{2} \frac{r}{q^{-1}(t/r)}g(q^{-1}(t/r))^{\gamma_1}},\hspace{20pt}\forall x\in X, r\in(0,R_X), t>0,$$
where $q(r):=c_{q} g(r)^{2\gamma_1}V_u(r)$ and $\gamma_1:=3+2b+2\beta_u$.
\end{lem}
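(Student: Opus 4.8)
The plan is to bound the tail of the exit time by iterating the expected exit time bound from Proposition \ref{expectedexittimes} together with a standard sub-multiplicative (chaining) argument over nested annuli. First I would fix $x\in X$, $r\in(0,R_X)$, and an integer $n$ (to be optimised at the end), and partition the ball $B(x,r)$ into $n$ concentric shells by considering the balls $B(x,kr/n)$ for $k=1,\dots,n$. To exit $B(x,r)$ the diffusion must successively cross each of these shells, and by the strong Markov property the time to do so is bounded below (stochastically) by a sum of $n$ independent copies of the exit time from a ball of radius comparable to $r/n$ started somewhere on its boundary. The key quantitative input is that, by Proposition \ref{expectedexittimes} and Lemma \ref{reslemma}, the expected exit time of such a ball is at least $c\,h_l((r/n)g(r/n)^2)$, and since $h_l(\rho)=\rho V_l(\rho)$ and $V_l=f_lV$, after moving the factors $g(r/n)^2$ out of $V$ and $f_l$ using Lemma \ref{volumegrowthlemma} and Lemma \ref{concavebounds} (this is where the exponents $b$ and $\beta_u$ enter, and where the power $2\gamma_1=2(3+2b+2\beta_u)$ on $g$ in the definition of $q$ comes from), this is at least $c\,(r/n)\,q(r/n)/r$-type quantity; more precisely it will be comparable to $q(r/n)$ up to the ratio $(r/n)/(\text{something})$, which is exactly the content of introducing the time-scale function $q(\rho)=c_qg(\rho)^{2\gamma_1}V_u(\rho)$.

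Next I would convert the lower bound on the mean exit time of a single shell-crossing into a lower bound on the Laplace transform, i.e. an estimate of the form $E^y[e^{-\lambda T}]\leq 1-c\lambda E^yT\leq e^{-c\lambda E^yT}$ valid for $\lambda E^yT$ small, or more robustly $E^y[e^{-\lambda T}]\leq (1+c\lambda E^yT)^{-1}$; one needs here a uniform-in-starting-point lower bound on $E^yT$, which Proposition \ref{expectedexittimes} does not directly give (it gives the lower bound only from the centre), so I would instead argue via a uniform lower bound on $\inf_{y}E^yT_{B(x_0,\rho)}$ obtained by noting every such $y$ lies at the centre of a ball $B(y,\rho/2)\subseteq B(x_0,\rho)$ and applying the centre bound to that smaller ball — a routine but necessary adjustment. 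Then, writing $T_{B(x,r)}\geq \sum_{k=1}^n \tau_k$ with $\tau_k$ the successive crossing times, independence (via the Markov property) gives $E^x[e^{-\lambda T_{B(x,r)}}]\leq \prod_k E[e^{-\lambda\tau_k}]\leq (1+c\lambda a)^{-n}$ where $a$ is the common lower bound $\asymp \rho V_l(\rho g(\rho)^2)$ with $\rho=r/n$. A Chebyshev/Markov exponential bound $P^x(T_{B(x,r)}\leq t)\leq e^{\lambda t}E^x[e^{-\lambda T_{B(x,r)}}]$ then yields $P^x(T_{B(x,r)}\leq t)\leq e^{\lambda t}(1+c\lambda a)^{-n}$.

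Finally I would optimise over $\lambda$ and $n$. Choosing $\lambda\asymp 1/a$ turns the bound into $e^{ct/a}2^{-n}$-type, i.e. roughly $\exp(c_1 n(t/(na) \cdot \text{const}) - c_2 n)$; the right choice is $n\asymp r/q^{-1}(t/r)$, which balances $ta/a$-scaling against the number of shells and produces precisely the claimed exponent $\frac{r}{q^{-1}(t/r)}g(q^{-1}(t/r))^{\gamma_1}$ once one unwinds that with $\rho=r/n=q^{-1}(t/r)$ one has $na\asymp n\rho V_l(\rho g(\rho)^2)\asymp t\cdot g(\rho)^{\gamma_1}$ after the Lemma \ref{concavebounds} manipulations, so that $t/(na)\asymp g(\rho)^{-\gamma_1}$ is bounded and the geometric factor $2^{-n}$ survives. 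Here I should also handle the regime where $t$ is large relative to $h_l(r\cdot\text{stuff})$ (so that the stated $n$ would be less than $1$) separately, in which case the probability is trivially bounded by $1$ and the asserted inequality holds after adjusting $c_1$. The main obstacle is the bookkeeping in the third step: getting the powers of $g$ to line up exactly as $\gamma_1=3+2b+2\beta_u$ requires careful tracking through Lemmas \ref{volumegrowthlemma} and \ref{concavebounds} of how the nested factors $g(\rho)^2$ propagate through $V$, $f_l$ and the doubling exponent (each application of $V(\Lambda\cdot)\leq C_u\Lambda^{\beta_u}V(\cdot)$ and $f_l(\lambda\cdot)\geq c\lambda^bf_l(\cdot)$ costs a power), and in verifying that all constants are uniform in $x$ and $r$ over the stated ranges, including near $r=R_X$ where one must be careful that the sub-balls used still have radius in the range $[0,R_X/2)$ demanded by Lemma \ref{reslemma} and Proposition \ref{expectedexittimes}.
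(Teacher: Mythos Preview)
Your overall strategy --- chain over $n$ sub-steps and then optimise $n$ to convert a one-step estimate into an exponential tail --- is the right one, and your bookkeeping on how the exponent $\gamma_1=3+2b+2\beta_u$ arises and how $n\asymp r/q^{-1}(t/r)$ should be chosen is essentially correct. But the single-step estimate you describe has a genuine gap.

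You claim that a lower bound on the mean exit time can be converted into an upper bound on the Laplace transform via $E^y[e^{-\lambda T}]\leq 1-c\lambda E^yT$ or $\leq (1+c\lambda E^yT)^{-1}$. Neither of these inequalities follows from a lower bound on $E^yT$ alone. If $T$ equals $0$ with probability $1-\epsilon$ and $a/\epsilon$ with probability $\epsilon$, then $E^yT=a$ but $E^y[e^{-\lambda T}]\geq 1-\epsilon$ for every $\lambda>0$, so no bound of the claimed form can hold. What is actually needed at each step is a \emph{linear tail bound} $P^y(\tau\leq s)\leq p+s/m$ with $p<1$, and obtaining this requires \emph{both} halves of Proposition~\ref{expectedexittimes}: the lower bound $E^yT\geq c\,h_l(\rho g(\rho)^2)$ together with the uniform-in-starting-point upper bound $\sup_zE^zT\leq h_u(\rho)$, combined through the Markov-property inequality $E^yT\leq t+P^y(T>t)\sup_zE^zT$. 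It is precisely the ratio $h_l(\rho g(\rho)^2)/h_u(\rho)$ that produces the factor $g(\rho)^{\gamma_1}$, and the $V_u$ appearing in the definition of $q$ comes from the $h_u$ in the denominator of this linear bound, not from the lower-bound side at all. Once you have the linear bound, either a Laplace-transform computation or (as the paper does) an appeal to \cite{BarBas}, Lemma~1.1, completes the argument.

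A smaller point of comparison: rather than concentric shells about $x$, the paper chains via stopping times $\sigma_{i+1}=\inf\{s\geq\sigma_i:R(X_s,X_{\sigma_i})\geq r/n\}$, so that at each step the process is automatically at the centre of the next ball $B(X_{\sigma_i},r/n)$. This makes the centre-based lower bound of Proposition~\ref{expectedexittimes} apply directly and avoids your $B(y,\rho/2)\subseteq B(x_0,\rho)$ workaround; the triangle inequality then gives $\sigma_n\leq T_{B(x,r)}$.
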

\begin{proof} The proof follows a standard pattern and involves the application of \cite{BarBas}, Lemma 1.1, to strengthen a simple linear bound to an exponential one. We start by deducing the relevant linear bound. By Proposition \ref{expectedexittimes}, we have $E^xT_{B(x,r)}\geq c_{3} h_l(g(r)^2r)$, $\forall x\in X$, $r\in(0,R_X/2)$, from which we may deduce that
\begin{equation}\label{eq1}
E^xT_{B(x,r)}\geq c_{4} r g(r)^{2(1+b+\beta_u)}f_l(r)V(r),
\end{equation}
by using Lemmas \ref{volumegrowthlemma} and \ref{concavebounds}. Furthermore, we may use the Markov property of $(X_t)_{t\geq0}$ to deduce that
\begin{equation}\label{eq2}
E^xT_{B(x,r)}\leq t+E^x1_{T_{B(x,r)}>t}E^{X_t}T_{B(x,r)}.
\end{equation}
Since, $E^{x_0}T_{B(x,r)}\leq  h_u(r)$, comparing (\ref{eq1}) and (\ref{eq2}) yields
$$ c_{4} rg(r)^{2(1+b+\beta_u)}f_l(r)V(r)\leq t+ P^x(T_{B(x,r)}>t) h_u(r),$$
which we may rearrange to obtain
$$P^x(T_{B(x,r)}\leq t)\leq 1- c_{4}g(r)^{3+2b+2\beta_u} +\frac{t}{h_u(r)},$$
our linear bound.

To get the exponential bound requires a kind of chaining argument, which we describe now. Let $n\geq 1$ and define stopping times $\sigma_i$, $i\geq0$ by
$$\sigma_0=0,\hspace{10pt}\sigma_{i+1}=\inf\{s\geq \sigma_i\::\:R(X_s,X_{\sigma_i})\geq r/n\}.$$
Let $\tau_i=\sigma_i - \sigma_{i-1}$, $i\geq 1$. Let $\mathcal{F}_t$ be the filtration generated by $\{X_s:\:s\leq t\}$ and $\mathcal{G}_m=\mathcal{F}_{\sigma_m}$. Our linear bound gives
\begin{eqnarray*}
P^x(\tau_{i+1}\leq t |\mathcal{G}_i)& = & P^{X_{\sigma_i}}(T_{B(X_{\sigma_i},\:r/n)}\leq t)\\
&\leq& 1- c_{4}g(r/n)^{3+2b+2\beta_u} +\frac{t}{h_u(r/n)}\\
&=& p(r/n)+\frac{t}{h_u(r/n)},
\end{eqnarray*}
where $p(r):= 1- c_{4}g(r)^{\gamma_1}\in(\frac{1}{2},1)$ for $r >0$, by reducing $c_4$ if necessary. We have $R(X_{\sigma_i},X_{\sigma_{i+1}})=r/n$ and so $R(X_0, X_t)\leq r$, for every $t\in[0,\sigma_n]$, which means that $\sigma_n=\sum_{i=1}^n\tau_i\leq T_{B(X_0,r)}$. Thus, by \cite{BarBas}, Lemma 1.1,
\begin{eqnarray*}
\ln P^x(T_{B(x,r)}\leq t) & \leq & 2\sqrt{\frac{nt}{ p(r/n) h_u(r/n)}}-n\ln\frac{1}{p(r/n)}\\
&\leq&  4\sqrt{\frac{nt}{ h_u(r/n)}}-c_{4}ng(r/n)^{\gamma_1},
\end{eqnarray*}
where we have used the inequality $\ln (1-x)\leq -x$ for $x\in[0,1]$.

Let $c_{q}=\frac{c_{4}^2}{64}$ so that $q$ is fixed. Now $q$ may be rewritten as
$$q(r)=c_{q}f_l(r)^{2\gamma_1}f_u(r)^{1-2\gamma_1}V(r).$$
Since $2\gamma_1 >0>1-2\gamma_1$, each of the terms in the product is increasing, with $V$ strictly increasing. Thus, $q$ is strictly increasing and $q^{-1}$ may be defined sensibly on the appropriate domain.

We consider first the case $r\geq q^{-1}(t/r)$. Define
\begin{eqnarray*}
n_0&:=& \sup\{n:\:8\sqrt{\frac{nt}{h_u(r/n)}}\leq c_{4}ng(r/n)^{\gamma_1}\}\\
&=& \sup \{n:\:nq^{-1}(t/r)\leq r\}.
\end{eqnarray*}
By assumption, we have $n_0\geq 1$ and because $q^{-1}(t/r)>0$ we must also have $n_0<\infty$. Thus,
$$n_0\leq \frac{r}{q^{-1}(t/r)} < n_0+1,$$
from which it follows that
\begin{eqnarray*}
\ln P^x(T_{B(x,r)}\leq t) & \leq & -c_{5}\left(\frac{r}{q^{-1}(t/r)}-1\right) g(q^{-1}(t/r))^{\gamma_1}\\
&\leq&-c_{5}\left(\frac{r}{q^{-1}(t/r)}\right) g(q^{-1}(t/r))^{\gamma_1}+c_{5},
\end{eqnarray*}
which yields the result in this case. If $r<q^{-1}(t/r)$ then
$$ \frac{r}{q^{-1}(t/r)}g(q^{-1}(t/r))^{\gamma_1}\leq 1,$$
and so we have the result for $r\in(0,R_X/2)$ by choosing $c_{1}$ sufficiently large. This inequality is easily extended to hold for $r\in(0,R_X)$ by  adjusting the constants as necessary.
\end{proof}

We are now ready to prove the on-diagonal lower bound. In the proof, we will use the following observation, which is an immediate consequence of Lemma \ref{concavebounds}: there is a constant $c_{1}$ such that
\begin{equation}\label{gbounds}
g(\lambda r)\geq c_{1} \lambda^{2b}g(r),\hspace{20pt}\forall \lambda\in[0,1],r\in[0,R_X).
\end{equation}

\begin{propn} \label{ondiaglowerhk} There exists a constant $c_{1}$ such that
$$p_t(x,x)\geq c_{1} \frac{h^{-1}(t)}{t} g(h^{-1}(t))^{\theta_1},\hspace{20pt}\forall x\in X, t>0,$$
where $\theta_1$ is chosen to satisfy (\ref{theta1cond}).\end{propn}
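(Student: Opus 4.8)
The plan is to follow the standard route for on-diagonal lower bounds: combine the exit-time tail estimate of Lemma~\ref{tailprob} with a parabolic Nash-type / oscillation argument, or alternatively with the chaining-free approach that derives a lower bound on $p_t(x,x)$ from a lower bound on $E^x T_{B(x,r)}$ together with an upper bound on $P^x(T_{B(x,r)}\le t)$. Concretely, the first step is to fix $x\in X$ and $t>0$, and choose the radius $r=r(t)$ appropriately --- the natural choice is $r$ comparable to $h^{-1}(t)$ up to correction factors involving $g$, so that $t$ is of the order of (a corrected version of) $h_l(rg(r)^2)$, the expected exit time from $B(x,r)$ given by Proposition~\ref{expectedexittimes}. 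With this choice, Lemma~\ref{tailprob} gives that $P^x(T_{B(x,r)}\le t)$ is bounded away from $1$, say $\le 1-\delta$ with $\delta$ a fixed positive constant, provided the exponent $\theta_1$ is chosen large enough to absorb the various powers of $g$ that appear when one converts $q^{-1}$ back into $h^{-1}$ and $V^{-1}$; this is exactly where condition~(\ref{theta1cond}) on $\theta_1$ enters.

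Next I would use the semigroup/Chapman--Kolmogorov identity. Writing $p_{2s}(x,x)=\int_X p_s(x,y)^2\,\mu(dy)\ge \int_{B(x,r)} p_s(x,y)^2\,\mu(dy)$ and applying Cauchy--Schwarz,
$$p_{2s}(x,x)\ \ge\ \frac{1}{\mu(B(x,r))}\left(\int_{B(x,r)} p_s(x,y)\,\mu(dy)\right)^2\ =\ \frac{P^x(X_s\in B(x,r))^2}{\mu(B(x,r))}.$$
By the choice of $r$ and the exit-time tail bound, $P^x(X_s\in B(x,r))\ge P^x(T_{B(x,r)}>s)\ge \delta$ for $s$ comparable to $t$, so that $p_{t}(x,x)\gtrsim 1/\mu(B(x,r))\gtrsim 1/V_u(r)$. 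It then remains to translate $1/V_u(r)$, with the chosen $r$, into the claimed form $c_1 h^{-1}(t)t^{-1}g(h^{-1}(t))^{\theta_1}$. This translation is the bookkeeping core of the argument: one uses Lemma~\ref{volumegrowthlemma}, Lemma~\ref{concavebounds} and inequality~(\ref{gbounds}) to move the correction factors $f_l, f_u, g$ in and out of $V$ and its inverse, and to compare $h^{-1}(t)$, $h_l^{-1}(t)$, $q^{-1}(t/r)$ and $r$ with one another up to powers of $g(h^{-1}(t))$, exactly as the preliminary lemmas were designed to allow.

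The main obstacle, I expect, is precisely this quantitative matching of exponents: one must verify that the power of $g(h^{-1}(t))$ accumulated through the chain of substitutions --- first from $h_l(rg(r)^2)$ in the exit-time lower bound, then from inverting $q$ in Lemma~\ref{tailprob}, then from passing from $V_u(r)^{-1}$ to $h^{-1}(t)t^{-1}$ --- is dominated by the stated threshold, so that choosing $\theta_1$ as in~(\ref{theta1cond}) makes every inequality go through in the same direction. The numerator $(3+2b+2\beta_u)(2+\beta_u)$ and the denominator $1-2b(3+2b+2\beta_u)$ in~(\ref{theta1cond}) strongly suggest that $\gamma_1=3+2b+2\beta_u$ from Lemma~\ref{tailprob} is raised to a power while being composed with functions whose arguments themselves carry factors $g(\cdot)^{2b}$ (via~(\ref{gbounds})), and that the denominator reflects solving a self-referential inequality of the form $g(r)\ge c\, g(h^{-1}(t))^{\text{something}}$ for $r$ only implicitly defined in terms of $t$. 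Handling that implicit definition cleanly --- showing the relevant $r$ exists, is comparable to $h^{-1}(t)$ up to a controlled power of $g$, and that the condition $b,\varepsilon<\tfrac{1}{4(2+\beta_u)}$ of~(\ref{betaepscondition1}) keeps the denominator positive --- is the delicate part; the probabilistic input (exit-time tail plus Cauchy--Schwarz) is routine by comparison.
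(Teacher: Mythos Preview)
Your proposal is correct and follows essentially the same route as the paper: Cauchy--Schwarz gives $p_{2t}(x,x)\ge P^x(T_{B(x,r)}>t)^2/V_u(r)$, the tail bound of Lemma~\ref{tailprob} is used to make the exit probability bounded below by a fixed constant for a suitable choice of $r$, and the remaining work is exactly the exponent bookkeeping you describe. The paper makes the implicit choice explicit via $t=rq(rg(r)^{\gamma_2})$ with $\gamma_2=(\theta_1-2\gamma_1)/(\beta_u+4b\gamma_1)$ and splits into small-$t$ and large-$t$ regimes, but this is precisely the ``handling that implicit definition cleanly'' step you anticipated.
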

\begin{proof} Using Cauchy-Schwarz,
\begin{eqnarray}\label{ondiaglower}
P^x(T_{B(x,r)}> t)^2 &\leq& P^x(X_t\in B(x,r))^2\nonumber\\
&=&\left(\int_{B(x,r)}p_t(x,z)\mu(dz)\right)^2\nonumber\\
&\leq& V(x,r)p_{2t}(x,x)\nonumber\\
&\leq& V_u(r)p_{2t}(x,x).
\end{eqnarray}
We prove the result by choosing a suitable $r$ in this inequality. We shall consider the cases for small and large $t$ separately. Define
$$\gamma_2:=\frac{\theta_1-2\gamma_1}{\beta_u+4b\gamma_1}.$$
We then have $\gamma_1-\gamma_2(1-2b\gamma_1)<0$. Noting that, if $g(r)\not\rightarrow 0$, $f_l(r)$ is bounded below by a strictly positive constant and $f_u(r)$ is bounded above by a finite constant. This means that we have uniform volume doubling and the result is given in \cite{Kumagai}, Proposition 4.3. Thus, we may assume $g(r)\rightarrow 0$ as $r\rightarrow 0$, and for any $c_2,c_3$, we can choose $r'$ such that
$$c_{2} e^{-c_{3} g(r)^{\gamma_1-\gamma_2(1-2b\gamma_1)}}\leq \frac{1}{2},\hspace{20pt}\forall r\leq r'.$$
Now, choose $c_2, c_3$ by Lemma \ref{tailprob}, so that $P^x(T_{B(x,r)}\leq t)$ is bounded above by
\[c_{2}\exp({-c_{3} \frac{r}{q^{-1}(t/r)}g(q^{-1}(t/r))^{\gamma_1}}),\]
choose $r'$ accordingly, and set $t':=r' q(r'g(r')^{\gamma_2})$. For $t\leq t'$ we can find $r\leq r'$ such that $t=r q(rg(r)^{\gamma_2})$ and so, for this choice of $r$ and $t$,
\begin{eqnarray*}
P^x(T_{B(x,r)}\leq t)\leq c_{2}e^{-c_{3} g(r)^{-\gamma_2}g(rg(r)^{\gamma_2})^{\gamma_1}}\leq c_{2} e^{-c_{3} g(r)^{\gamma_1-\gamma_2(1-2b\gamma_1)}}\leq\frac{1}{2},
\end{eqnarray*}
where we have applied the inequality at (\ref{gbounds}) for the second inequality. Thus, (\ref{ondiaglower}) gives that $p_{2t}(x,x)\geq 1/4V_u(r)$. After substituting the definition of $q$ and manipulating we find that
\begin{eqnarray*}
t &=& c_{q} rV_u(rg(r)^{\gamma_2})g(rg(r)^{\gamma_2})^{2\gamma_1}\\
&\geq& c_{4}r g(r)^{\theta_1} V_u(r),
\end{eqnarray*}
and hence $p_{2t}(x,x)\geq c_{4}r g(r)^{\theta_1}/4t$. We also have
$$t\leq c_{q} rV_u(r) g(r)^{2\gamma_1}\leq c_{q}rV(r)g(r)^{2\gamma_1-1}\leq c_{q} h(r)\leq h(c_{5}r),$$
noting that $2\gamma_1>1$ and taking $c_{5}=\max\{1,c_{q}\}$. Consequently, $h^{-1}(t)\leq c_{5}r$ and so
$$p_t(x,x)\geq p_{2t}(x,x)\geq c_{6}\frac{h^{-1}(t)}{t} g(h^{-1}(t))^{\theta_1},$$
using that $p_t(x,x)$ is decreasing in $t$. Hence, we have the bound for $t\leq t'$.

Before proceeding we note that $rg(r)^{-\gamma_1}=O(r^{1-2\varepsilon\gamma_1})\rightarrow 0$, as $r\rightarrow 0$, because, by the bound on $\varepsilon$ and $b$ at (\ref{betaepscondition1}), $2\varepsilon \gamma_1<1$. Therefore, we can choose $\tilde{r}$ less than 1 such that
$$c_{2}e^{-c_{3}\frac{1}{\tilde{r}}g(\tilde{r})^{\gamma_1}}\leq\frac{1}{2}.$$
Choose $t'':=q(\tilde{r})$. Now let $t\geq t''$ and define $r$ by $t=rq(r\tilde{r})$. The right hand side of this equation is increasing and so, because $t$ is bounded below (by $t''$) we can assume that $r$ is bounded below by 1.  Hence, applying Lemma \ref{tailprob} gives
\begin{eqnarray*}
P^x(T_{B(x,r)}\leq t)&\leq& c_{2}e^{-c_{3} \frac{r}{q^{-1}(t/r)}g(q^{-1}(t/r))^{\gamma_1}}\\
&\leq& c_{2}e^{-c_{3} \frac{r}{q^{-1}(t/r)}g(q^{-1}(t/r)/r)^{\gamma_1}}\\
&=&c_{2}e^{-c_{3} \frac{1}{\tilde{r}} g(\tilde{r})^{\gamma_1}}\\
&\leq&\frac{1}{2}.
\end{eqnarray*}
Hence, we also have $p_{2t}(x,x)\geq 1/4V_u(r)$ in this case, by (\ref{ondiaglower}). By bounding $t$ in a similar way to the case $t\leq t'$ it may be deduced from this that
$$p_t(x,x)\geq  c_{7}\frac{h^{-1}(t)}{t} g(h^{-1}(t))^{2\gamma_1},$$
and so we have the bound in this case, because $2\gamma_1\leq\theta_1$. Finally, for $t\in(t',t'')$ we may obtain the result by choosing $c_{1}$ small enough.
\end{proof}

We conclude this section by proving the fluctuation results of Theorem \ref{fluctres}.

\begin{proof}[of Theorem \ref{fluctres}]  The left hand inequality of (\ref{infres}) and the right hand inequality of (\ref{supres}) are immediate corollaries of Propositions \ref{ondiagupper} and \ref{ondiaglowerhk}. We now prove the right hand inequality of (\ref{infres}). As at (\ref{trandensupper}), we repeat the argument of \cite{Kumagai} to obtain
\begin{equation}\label{trandensupper2}
p_{2rV(x,r)}(x,x)\leq \frac{2}{V(x,r)},\hspace{20pt}\forall x\in X,r\in[0,R_X).
\end{equation}
Hence, because $p_t(x,x)$ is decreasing in $t$, this means that
$$\inf_{x\in X} p_{2h_u(r)}(x,x)\leq \inf_{x\in X} p_{2rV(x,r)}(x,x)\leq \frac{2}{\sup_{x\in X}V(x,r)} \leq \frac{c_{1}}{V_u(r)},$$
for all $r\in[0,R_X)$, where we use the assumption at (\ref{supcond}) for the final inequality. Setting $r=h_u^{-1}(t/2)$, we obtain $\inf_{x\in X}p_t(x,x)\leq c_{2} h_u^{-1}(t)/t$, which gives the result.

It remains to prove the left hand inequality of (\ref{supres}). The majority of the proof of this consists of repeating arguments that are almost identical to those we have seen already, and so we omit many of the details here. By the assumption at (\ref{infcond}), we can find a sequence $(x_n, r_n)_{n\in\mathbb{N}}$ such that $x_n\in X$, $r_n\rightarrow 0$ and $V(x_n, r_n)\leq c_{3} V_l(r_n)$. By proceeding similarly to the proofs of Lemmas \ref{coversize} and \ref{reslemma}, it may be deduced that
$$c_{4}r_n\leq R(x_n, B(x_n, r_n)^c)\leq r_n,\hspace{20pt}\forall n\in \mathbb{N}.$$
Using this result, by following the argument of Proposition \ref{expectedexittimes}, we find that
$$E^{x_n}T_{B(x_n,r_n)}\geq c_{5} h_l(r_n)\hspace{20pt}\forall n\in \mathbb{N},$$
and
$$E^{x}T_{B(x_n,r_n)}\leq  c_{3} h_l(r_n),\hspace{20pt}\forall x\in X, n \in\mathbb{N}.$$
Thus, by utilising the Markov property of $(X_t)_{t\geq 0}$ as at (\ref{eq2}), it follows that
$$P^{x_n}(T_{B(x_n, r_n)}\leq t) \leq 1 - \frac{c_{5}}{c_{3}}+\frac{t}{c_{3}h_l(r_n)},$$
and in particular
$$P^{x_n}\left(T_{B(x_n, r_n)}\leq \frac{c_{5}}{2}h_l(r_n) \right)\leq 1 - \frac{c_{5}}{2c_{3}}<1,\hspace{20pt}\forall n\in \mathbb{N}.$$
The Cauchy-Schwarz inequality at (\ref{ondiaglower}) applied to $x_n$, $r_n$ and $t_n=c_{5}h_l(r_n)/2$ will then imply that
$$\sup_{x\in X}p_{t_n}(x,x)\geq p_{t_n}(x_n,x_n)\geq \frac{c_{6}}{V(x_n,r_n)}\geq \frac{c_{6}}{c_{3}V_l(r_n)}\geq\frac{c_{7} h_l^{-1}(t_n)}{t_n}.$$
Noting that $t_n\rightarrow 0$, this completes the proof.
\end{proof}

\section{Proof of off-diagonal heat kernel bounds}\label{offdiagonal}

Throughout this section, we shall be assuming the extra anti-doubling condition on the volume growth, (\ref{volumegrowthlower}), and the tighter upper bounds on $b$ and $\varepsilon$, (\ref{betaepscondition2}), that were stated in Section \ref{offdiagstate}. These allow us to obtain the off-diagonal estimates stated there. We start by presenting a counterpart to Lemma \ref{volumegrowthlemma} for small $\lambda$, which the extra volume growth condition implies.

\begin{lem} \label{volumegrowthlemma2} Let $\lambda\leq 1$, then $V(\lambda r)\leq C_l \lambda^{\beta_l}V(r)$, for every $r\leq R_X'$.
\end{lem}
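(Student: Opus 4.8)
The plan is to mirror the proof of Lemma~\ref{volumegrowthlemma}, replacing the doubling property (\ref{doubling}) by the anti-doubling property (\ref{volumegrowthlower}) and iterating it \emph{upwards} from the small radius $\lambda r$. The case $\lambda = 0$ is trivial since $V(0)=0$, so I would assume $\lambda\in(0,1]$ and set $n:=\lceil \ln(1/\lambda)/\ln 2\rceil\in\mathbb{Z}_{\geq 0}$, so that $2^{-n}\leq\lambda$ and $2^{n-1}\lambda<1$.

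The key point is that all the intermediate radii encountered when applying (\ref{volumegrowthlower}) stay below $R_X'$: indeed, for $0\leq j\leq n-2$ we have $2^j\lambda r\leq 2^{n-1}\lambda r< r\leq R_X'$, so (\ref{volumegrowthlower}) applies at each of these radii and chaining the resulting inequalities gives $V(2^{n-1}\lambda r)\geq C_l^{\,n-1}V(\lambda r)$. Then, since $2^{n-1}\lambda r<r$ and $V$ is increasing, $V(2^{n-1}\lambda r)\leq V(r)$, and hence $V(\lambda r)\leq C_l^{-(n-1)}V(r)$.

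It remains to observe that $C_l^{-(n-1)}=C_l\cdot C_l^{-n}=C_l\,2^{-n\beta_l}\leq C_l\,\lambda^{\beta_l}$, where the last inequality uses $\lambda\geq 2^{-n}$ together with $\beta_l>0$. Combining this with the previous display yields $V(\lambda r)\leq C_l\lambda^{\beta_l}V(r)$, as required; note that the extra factor $C_l$ in the statement is precisely what absorbs the loss of one step (using only $n-1$ rather than $n$ doublings, which is needed so that the iteration does not leave the range $[0,R_X']$). No step is really an obstacle here — the only thing to be careful about is keeping the iteration of (\ref{volumegrowthlower}) confined to radii $\leq R_X'$, which is guaranteed by the choice of $n$.
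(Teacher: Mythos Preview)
Your proposal is correct and follows exactly the approach the paper indicates, namely mirroring the proof of Lemma~\ref{volumegrowthlemma} with the anti-doubling inequality (\ref{volumegrowthlower}) in place of (\ref{doubling}); your care in iterating only $n-1$ times so that all intermediate radii stay in $[0,R_X']$ is precisely what produces the extra factor $C_l$ in the statement. The only triviality to note is that when $\lambda=1$ one has $n=0$ and the chaining step is vacuous, but then the claimed bound $V(r)\leq C_l V(r)$ is immediate from $C_l>1$.
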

\begin{proof} This follows a similar argument to the proof of Lemma \ref{volumegrowthlemma}.
\end{proof}

As is usually the case in situations similar to this, the off-diagonal upper bound is relatively straightforward to obtain from the upper bounds for the on-diagonal part of the heat kernel and the tail of the exit time distribution of resistance balls. However, before proceeding with the proof of the off-diagonal upper bound, it will be useful to write the result of Lemma \ref{tailprob} in a slightly clearer form.

\begin{propn} \label{tailprobnice} If $R_X'=\infty$, let $t_0=\infty$, otherwise fix $t_0\in(0,\infty)$. Then there exist constants $c_{1}, c_{2}$ such that
$$P^x(T_{B(x,r)}\leq t)\leq c_{1}e^{-c_{2} \frac{r}{V^{-1}(t/r)}g(V^{-1}(t/r))^{\theta_3}},\hspace{8pt}\forall x\in X, r\in(0,R_X), t\in(0,t_0).$$
\end{propn}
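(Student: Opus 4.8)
The goal is to convert the bound of Lemma \ref{tailprob}, which is stated in terms of the auxiliary function $q(r) = c_q g(r)^{2\gamma_1}V_u(r)$, into the cleaner form involving $V^{-1}$ and a single exponent $\theta_3$. The idea is that $q$ and $V$ differ only by a factor that is a power of $g$ times a power of $f_u$, and since $g(r)^{-1} = O(r^{-2\varepsilon})$ while $V$ grows at least like $r^{\beta_l}$ near $0$ by the anti-doubling condition (\ref{volumegrowthlower}), the distortion between $q^{-1}(s)$ and $V^{-1}(s)$ is itself controlled by a power of $g$. First I would compare $q^{-1}$ and $V^{-1}$: writing $q(r) = c_q f_l(r)^{2\gamma_1}f_u(r)^{1-2\gamma_1}V(r)$ (as observed in the proof of Lemma \ref{tailprob}) and using condition (i) on $f_l,f_u$ together with Lemma \ref{volumegrowthlemma2}, one obtains two-sided bounds of the form $c\, r^{1+\delta} \lesssim q(r)/V(r) \lesssim C$ (recall $f_l \le 1 \le f_u$ and $1-2\gamma_1 < 0$, so $q \le c_q V$, while $q \ge c_q g(r)^{2\gamma_1} V(r) \gtrsim r^{2\varepsilon\cdot 2\gamma_1} V(r)$). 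Inverting and using Lemma \ref{volumegrowthlemma2} again to move the polynomial factor through $V^{-1}$, this yields constants such that $c_3 V^{-1}(s) g\big(V^{-1}(s)\big)^{\kappa} \le q^{-1}(s) \le V^{-1}(s)$ for an explicit exponent $\kappa$ depending on $\gamma_1, \beta_l, \varepsilon, b$; the anti-doubling condition is exactly what makes $V^{-1}$ well-behaved enough (Hölder-type continuity) for this step.

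Next I would substitute $s = t/r$ and plug these comparisons into the exponent $\frac{r}{q^{-1}(t/r)} g(q^{-1}(t/r))^{\gamma_1}$ appearing in Lemma \ref{tailprob}. Replacing $q^{-1}(t/r)$ in the denominator by its lower bound $c_3 V^{-1}(t/r)g(V^{-1}(t/r))^{\kappa}$ only decreases the fraction's denominator's effect in the right direction (we want a lower bound on the exponent), and replacing $g(q^{-1}(t/r))$ by a power of $g(V^{-1}(t/r))$ — again via (\ref{gbounds}) and the comparison $q^{-1}(t/r) \ge c_3 V^{-1}(t/r) g(V^{-1}(t/r))^\kappa$, so $g(q^{-1}(t/r)) \ge c\, g(V^{-1}(t/r))^{1 + 2b\kappa}$ — collects everything into a single power of $g(V^{-1}(t/r))$. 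Bookkeeping the exponents, one checks that the resulting power is dominated by $\theta_3 = (3+2b+2\beta_u)(1+2\beta_l^{-1})$ (and one uses $g \le 1$ to pass from a smaller exponent to $\theta_3$ at the cost of shrinking $c_2$), which is presumably precisely why $\theta_3$ is defined the way it is. This gives the claimed bound for $r \in (0, R_X/2)$ and $t \in (0,t_0)$, with $t_0 = \infty$ permitted when $R_X' = \infty$ since then Lemma \ref{volumegrowthlemma2} holds for all $r$.

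Finally, the range $r \in [R_X/2, R_X)$ is handled by a trivial adjustment of constants exactly as at the end of the proof of Lemma \ref{tailprob} (the exponent is bounded below on that range, so enlarging $c_1$ suffices), and one should note the degenerate case $g(r) \not\to 0$: then $f_l$ is bounded below and $f_u$ above, $q \asymp V$ up to constants, and the statement reduces directly to the uniform-volume-doubling estimate.

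**Main obstacle.** The one genuinely delicate point is the comparison between $q^{-1}$ and $V^{-1}$ and the consequent control of $g(q^{-1}(s))$ by a power of $g(V^{-1}(s))$: this requires using the anti-doubling condition (\ref{volumegrowthlower}) to show $V^{-1}$ does not contract too fast, then carefully tracking how the various exponents $\gamma_1, b, \varepsilon, \beta_l, \beta_u$ combine so that the final power of $g$ is no larger than $\theta_3$. Everything else is routine manipulation of the kind already carried out repeatedly in Section \ref{ondiagsec}.
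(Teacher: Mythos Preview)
Your overall strategy coincides with the paper's: start from Lemma~\ref{tailprob}, use the two-sided comparison $c_q g(r)^{2\gamma_1}V(r)\le q(r)\le c_q V(r)$, invert via Lemma~\ref{volumegrowthlemma2}, and substitute into the exponent. However, you have the inverted inequalities the wrong way round, and this propagates into the substitution step.

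From $q(r)\le c_q V(r)$ one gets, after inversion, $q^{-1}(s)\ge c_4^{-1}V^{-1}(s)$ (a smaller increasing function has a larger inverse), not $q^{-1}(s)\le V^{-1}(s)$ as you write. Likewise, from $q(r)\ge c_q g(r)^{2\gamma_1}V(r)\ge V\bigl(c_3\, r\, g(r)^{2\gamma_1/\beta_l}\bigr)$ one obtains the \emph{upper} bound $V^{-1}(s)\ge c_3\, q^{-1}(s)\, g(q^{-1}(s))^{2\gamma_1/\beta_l}$. So the correct sandwich is the reverse of yours: the clean bound is the lower one, and the upper bound on $q^{-1}$ carries a negative power of $g$. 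Your sentence ``replace $q^{-1}(t/r)$ in the denominator by its lower bound\dots\ we want a lower bound on the exponent'' is accordingly backwards: to lower-bound $r/q^{-1}(t/r)$ you need an \emph{upper} bound on $q^{-1}(t/r)$, and that is precisely where the extra factor $g^{2\gamma_1/\beta_l}$ enters.

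With the inequalities in the right direction the paper proceeds exactly as you intend: the upper bound on $q^{-1}$ gives
\[
\frac{r}{q^{-1}(t/r)}\,g(q^{-1}(t/r))^{\gamma_1}\ \ge\ c\,\frac{r}{V^{-1}(t/r)}\,g(q^{-1}(t/r))^{\gamma_1(1+2\beta_l^{-1})},
\]
and then the lower bound $q^{-1}(t/r)\ge c_4^{-1}V^{-1}(t/r)$, monotonicity of $g$, and (\ref{gbounds}) convert $g(q^{-1}(t/r))$ into a constant times $g(V^{-1}(t/r))$. The resulting exponent is $\gamma_1(1+2\beta_l^{-1})=\theta_3$ on the nose, so no appeal to $g\le 1$ is needed. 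The residual regime is governed by the size of $t/r$ (namely $t/r> q(c_6 R_X')$ when $R_X'<\infty$), not by $r\in[R_X/2,R_X)$ as you write, but the mechanism is the one you describe: the exponent is then bounded and can be absorbed into $c_1$.
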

\begin{proof} In Lemma \ref{tailprob} we obtained a bound for the relevant probability in terms of the function $q^{-1}$. To establish this claim we use Lemma \ref{volumegrowthlemma2} to compare $q^{-1}$ to functions of $V^{-1}$ and $g$ only. Recall $q(r)=c_{q}g(r)^{2\gamma_1}V_u(r)$, and so for $r\leq R_X'$, we have $q(r)\geq V(c_{3} r g(r)^{2\gamma_1/\beta_l})$, for some constant $c_{3}$. Thus,
\begin{equation}\label{eq3}
V^{-1}(t/r)\geq c_{3} q^{-1}(t/r)g(q^{-1}(t/r))^{2\gamma_1/\beta_l},
\end{equation}
for $t/r\leq q(R_X')$. We also have the following upper bound on $q$
$$q(r)\leq c_{q} V(r) g(r)^{2\gamma_1-1}\leq c_{q} V(r) \leq V(c_{4} r),$$
where $c_{4}=\max\{(c_{q}C_l)^{1/\beta_l},1\}$, which holds whenever $c_{4}r\leq R_X'$. Thus,
\begin{equation}\label{eq4}
V^{-1}(t/r)\leq c_{4} q^{-1}(t/r),
\end{equation}
for $t/r \leq q(c_{4}^{-1}R_X')$. Combining the bounds at (\ref{eq3}) and (\ref{eq4}) we find that
$$\frac{r}{q^{-1}(t/r)}g(q^{-1}(t/r))^{\gamma_1}\geq c_{5}\frac{r}{V^{-1}(t/r)}g(V^{-1}(t/r))^{\gamma_1(1+2\beta_l^{-1})}$$
for all $t/r \leq q(c_{6}R_X')$, where $c_{6}:=\min\{c_{4}^{-1},1\}$. Thus, we have the result when $R_X'=\infty$. Assume now $R_X'<\infty$ and fix $t_0<\infty$. The previous equation gives us the result when $t/r \leq q(c_{6}R_X')$ and so we can assume that this does not hold. Hence,
$$\frac{r}{V^{-1}(t/r)}g(V^{-1}(t/r))^{\theta_3}\leq\frac{t_0}{q(c_{6}R_X')V^{-1}(q(c_{6}R_X'))},\hspace{20pt}\forall t<t_0,$$
and so the result will hold on choosing $c_{1}$ suitably large.
\end{proof}

\begin{propn} \label{offdiagupperprop}  We can find a $t_0>0$ such that the following holds: there exists $c_{1}, c_{2}$ such that, if $x,y\in X$, $t\in(0,t_0)$,
$$p_t(x,y)\leq  c_{1}\frac{h^{-1}(t)}{t}f_l(h^{-1}(t))^{-1}e^{-c_{2} \frac{R}{V^{-1}(t/R)}g(V^{-1}(t/R))^{\theta_3}},$$
where $R=R(x,y)$. If $R_X'=\infty$, then we can take $t_0=\infty$, otherwise $t_0\in(0,\infty)$.
\end{propn}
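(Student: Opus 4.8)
The plan is to follow the standard route for passing from on-diagonal bounds plus an exit-time tail bound to an off-diagonal upper bound, adapting the argument of \cite{Kumagai}, Proposition 4.4, to keep track of the fluctuation factors. First, I would use the semigroup identity $p_t(x,y)=\int_X p_{t/2}(x,z)p_{t/2}(z,y)\mu(dz)$ together with Cauchy--Schwarz and the on-diagonal upper bound of Proposition \ref{ondiagupper} to get a crude bound of the form $p_t(x,y)\leq (p_t(x,x)p_t(y,y))^{1/2}\leq c\,h_l^{-1}(t)/t$; this will eventually be the prefactor (rewritten via $h_l^{-1}(t)/t\leq c\,h^{-1}(t)f_l(h^{-1}(t))^{-1}/t$ using the claim \eqref{proofclaim}). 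The real work is extracting the exponential decay in $R=R(x,y)$, and for that I would split the process on the event that it has, or has not, exited the ball $B(x,R/2)$ by time $t/2$ (and symmetrically from $y$). Since $R(x,y)=R$, the balls $B(x,R/2)$ and $B(y,R/2)$ are disjoint, so on the event $\{X_{t/2}\in B(x,R/2)\}$ the path from $x$ must not have exited, and we can write
\begin{eqnarray*}
p_t(x,y)&=&E^x\left[p_{t/2}(X_{t/2},y)\right]\\
&\leq& \|p_{t/2}(\cdot,y)\|_\infty\, P^x\!\left(T_{B(x,R/2)}\leq t/2\right) + E^x\!\left[p_{t/2}(X_{t/2},y);\,X_{t/2}\in B(x,R/2)\right],
\end{eqnarray*}
and then iterate/symmetrise so that on the surviving term the path from $y$ is also confined to $B(y,R/2)$, which is impossible — hence that term vanishes, leaving only the exit-probability term (times the sup-norm bound $\|p_{t/2}(\cdot,y)\|_\infty\leq c\,h_l^{-1}(t)/t$).

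Concretely, I expect to use the chain $p_t(x,y)\leq c\,(h_l^{-1}(t)/t)\big(P^x(T_{B(x,R/2)}\leq t/2)+P^y(T_{B(y,R/2)}\leq t/2)\big)$, obtained by conditioning at time $t/2$ from each endpoint. Then Proposition \ref{tailprobnice} applied with radius $R/2$ gives each probability bounded by $c_1\exp(-c_2\frac{R/2}{V^{-1}(2t/R)}g(V^{-1}(2t/R))^{\theta_3})$. To replace $V^{-1}(2t/R)$ by $V^{-1}(t/R)$ and $\frac{R/2}{\cdot}$ by a constant times $\frac{R}{\cdot}$ I would invoke the doubling/anti-doubling properties (Lemmas \ref{volumegrowthlemma}, \ref{volumegrowthlemma2}) and the quasi-homogeneity bound \eqref{gbounds} for $g$, absorbing all the resulting multiplicative constants into $c_1,c_2$; the only subtlety is that these comparisons are valid for $t/R$ in a bounded range, which is exactly why $t_0$ must be finite when $R_X'<\infty$, and when it is not one handles $t/R$ large separately (there the exponential is harmless and one falls back on the on-diagonal bound $p_t(x,y)\leq c\,h_l^{-1}(t)/t$, noting the exponent $\frac{R}{V^{-1}(t/R)}g(V^{-1}(t/R))^{\theta_3}$ is then bounded).

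The main obstacle, as in the on-diagonal case, is bookkeeping the fluctuation exponents: one must check that the factor $g(V^{-1}(t/R))^{\theta_3}$ produced by Proposition \ref{tailprobnice} is not degraded when we pass from radius $R/2$ to radius $R$ and from time $t$ to $t/2$, and that the prefactor $h_l^{-1}(t)/t$ can legitimately be rewritten as $h^{-1}(t)f_l(h^{-1}(t))^{-1}/t$ — the latter is exactly the content of \eqref{proofclaim}--\eqref{secondproofclaim} already established in the proof of Proposition \ref{ondiagupper}, so that part is essentially free. I would also need the elementary observation that $p_{t/2}(\cdot,y)$ is bounded by its on-diagonal value (via $p_{t/2}(z,y)\leq (p_{t/2}(z,z)p_{t/2}(y,y))^{1/2}$ and monotonicity of $t\mapsto p_t(x,x)$), which is standard. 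No new ideas beyond \cite{Kumagai} are needed; the proof is a matter of carefully propagating the $g$-corrections through the conditioning argument.
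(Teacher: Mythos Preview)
Your proposal is correct and follows essentially the same route as the paper, which simply records that once the on-diagonal upper bound (Proposition~\ref{ondiagupper}) and the exit-time tail estimate (Proposition~\ref{tailprobnice}) are in hand, the off-diagonal bound follows by the standard argument of \cite{Barlow}, Theorem~3.11. One small expository point: your claim that after symmetrising ``the surviving term vanishes'' is not quite the right picture---the clean way to see the bound you write down is to use $p_{2t}(x,y)=\int p_t(x,z)p_t(y,z)\,\mu(dz)$, split the integral over $B(x,R/2)$ and its complement, and note that $B(x,R/2)\subseteq B(y,R/2)^c$, so each piece is controlled by an exit probability times $\|p_t\|_\infty$; but the concrete inequality you state and the subsequent bookkeeping with doubling, anti-doubling and \eqref{gbounds} are exactly what is needed.
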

\begin{proof} Once we have the on-diagonal bound, Lemma \ref{ondiagupper}, and the exponential bound for the exit time distribution, Lemma \ref{tailprobnice}, the proof is standard, see \cite{Barlow}, Theorem 3.11.
\end{proof}

We now start to work towards the full lower bound. We start by deducing the near diagonal result using a modulus of continuity argument. This is the extension of the result obtained by Kumagai in the uniform volume doubling case, as stated at (\ref{kumagairesultlower}).

\begin{lem}\label{nle}  There exist constants $c_{1},c_{2}$ such that, whenever $x,y\in X$ satisfy
$$R(x,y)\leq c_{1} h^{-1}(t) g(h^{-1}(t))^{\theta_1},$$
we have
$$p_t(x,y)\geq c_{2} \frac{h^{-1}(t)}{t} g(h^{-1}(t))^{\theta_1},\hspace{20pt}\forall t>0.$$
\end{lem}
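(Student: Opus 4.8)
The plan is to combine the on-diagonal lower bound of Proposition \ref{ondiaglowerhk} with a modulus-of-continuity estimate for the function $y\mapsto p_t(x,y)$, obtained via the fundamental resistance inequality (\ref{resistance}). Concretely, for fixed $x$ and $t>0$, write $p_t(x,\cdot)=P_{t/2}p_{t/2}(x,\cdot)$ as in the proof of Proposition \ref{mainyo}, so that $p_t(x,\cdot)\in\mathcal{F}$ and hence
$$|p_t(x,x)-p_t(x,y)|^2\leq R(x,y)\,\mathcal{E}\bigl(p_t(x,\cdot),p_t(x,\cdot)\bigr),\qquad\forall x,y\in X.$$
So the whole argument reduces to bounding the Dirichlet energy $\mathcal{E}(p_t(x,\cdot),p_t(x,\cdot))$ from above by something of order (a power of) $h^{-1}(t)/t^2$, and then choosing $c_1$ small enough that $R(x,y)\leq c_1 h^{-1}(t)g(h^{-1}(t))^{\theta_1}$ forces $|p_t(x,x)-p_t(x,y)|$ to be at most half of $c_1' (h^{-1}(t)/t)g(h^{-1}(t))^{\theta_1}$, the lower bound for $p_t(x,x)$. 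The triangle inequality then gives the claimed lower bound for $p_t(x,y)$ with $c_2$ half the constant from Proposition \ref{ondiaglowerhk}.

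First I would control the energy. The standard computation is
$$\frac{d}{ds}\|p_s(x,\cdot)\|_2^2 = \frac{d}{ds}\,p_{2s}(x,x) = -2\,\mathcal{E}\bigl(p_s(x,\cdot),p_s(x,\cdot)\bigr),$$
which identifies $\mathcal{E}(p_{t/2}(x,\cdot),p_{t/2}(x,\cdot))$ with $-\tfrac12\tfrac{d}{ds}p_{2s}(x,x)|_{s=t/2}$; since $p_{2s}(x,x)$ is decreasing and convex in $s$ this derivative can be bounded in absolute value by, say, $s^{-1}(p_s(x,x)-p_{2s}(x,x))\leq s^{-1}p_s(x,x)$ (a discrete convexity/monotonicity trick), which by the on-diagonal upper bound of Proposition \ref{ondiagupper} is at most $c\,h_l^{-1}(t)/t^2\leq c\,h^{-1}(t)f_l(h^{-1}(t))^{-1}/t^2$. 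Since $f_l^{-1}$ is a lower-order fluctuation (bounded by a power of $g^{-1}$ via Lemma \ref{concavebounds} and the definition of $g$), this is absorbed into a suitable power of $g(h^{-1}(t))^{-1}$.

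Putting the pieces together: with $R:=R(x,y)$ we get
$$|p_t(x,x)-p_t(x,y)|^2 \leq R\cdot c\,\frac{h^{-1}(t)}{t^2}f_l(h^{-1}(t))^{-1},$$
so if $R\leq c_1 h^{-1}(t)g(h^{-1}(t))^{\theta_1}$ then, bounding $f_l^{-1}$ by a power of $g^{-1}$ and choosing $\theta_1$ large enough (consistent with (\ref{theta1cond}), which already has room built in), the right-hand side is at most $\tfrac14 (c_1'h^{-1}(t)/t)^2 g(h^{-1}(t))^{2\theta_1}$ once $c_1$ is small. Hence $p_t(x,y)\geq p_t(x,x) - |p_t(x,x)-p_t(x,y)|\geq \tfrac12 c_1'(h^{-1}(t)/t)g(h^{-1}(t))^{\theta_1}$, giving the lemma with $c_2=c_1'/2$. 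The one point requiring care — and the main obstacle — is the bookkeeping of exponents: the energy bound carries an extra factor $f_l(h^{-1}(t))^{-1}$ (equivalently a few more powers of $g^{-1}$), and one must check that the same exponent $\theta_1$ satisfying (\ref{theta1cond}) still makes the modulus-of-continuity term genuinely smaller than the on-diagonal term after taking the square root; this is exactly why the lower bound on $\theta_1$ in (\ref{theta1cond}) is taken with slack rather than as an equality.
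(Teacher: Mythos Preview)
Your overall plan---resistance inequality plus an energy bound plus the on-diagonal lower bound---is exactly the paper's strategy, and you even essentially derive the key estimate $\mathcal{E}(p_t(x,\cdot),p_t(x,\cdot))\leq p_t(x,x)/t$. The gap is in what you do next: you replace $p_t(x,x)$ in the energy bound by the on-diagonal \emph{upper} bound from Proposition~\ref{ondiagupper}, picking up an extra factor $f_l(h^{-1}(t))^{-1}$ that you then try to absorb into the exponent. This does not close. After substituting $R\leq c_1 h^{-1}(t)g(h^{-1}(t))^{\theta_1}$ you need
\[
c_1\,g(h^{-1}(t))^{\theta_1}f_l(h^{-1}(t))^{-1}\;\leq\;\tfrac14(c_1')^{2}\,g(h^{-1}(t))^{2\theta_1},
\]
i.e.\ $c_1\leq \tfrac14(c_1')^{2} g(h^{-1}(t))^{\theta_1}f_l(h^{-1}(t))$, and the right-hand side tends to $0$ as $t\to0$. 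No choice of $c_1$ works uniformly, and ``choosing $\theta_1$ large enough'' does not help: whatever $\theta_1$ you fix, the same $\theta_1$ appears in both the hypothesis and the on-diagonal lower bound, and your inequality always demands an \emph{additional} $\theta_1$ powers of $g$ plus the $f_l$ factor. At best your argument proves a weaker lemma with a strictly larger exponent in the hypothesis than in the conclusion.

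The fix is simply not to discard $p_t(x,x)$. Keep the energy bound in the form $|p_t(x,x)-p_t(x,y)|^2\leq R(x,y)\,p_t(x,x)/t$, factor out $p_t(x,x)$ to get
\[
p_t(x,y)\geq p_t(x,x)\Bigl(1-\sqrt{R(x,y)/(t\,p_t(x,x))}\Bigr)\geq\tfrac12\,p_t(x,x)\quad\text{whenever }4R(x,y)\leq t\,p_t(x,x),
\]
and only \emph{then} invoke the on-diagonal lower bound of Proposition~\ref{ondiaglowerhk}. The condition $4R(x,y)\leq t\,p_t(x,x)$ is implied by $R(x,y)\leq \tfrac14 c_1' h^{-1}(t)g(h^{-1}(t))^{\theta_1}$, which is exactly the hypothesis with the \emph{same} $\theta_1$. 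No extra exponent bookkeeping is needed; the slack in (\ref{theta1cond}) plays no role here.
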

\begin{proof} The proof is again standard. For any $x\in X$, $t>0$, it is known that the transition density satisfies $\mathcal{E}(p_t(x,\cdot),p_t(x,\cdot))\leq p_t(x,x)/t$. For a proof, see \cite{Barlow}, Proposition 4.16. In conjunction with the inequality at (\ref{resistance}), we obtain from this that
$$|p_t(x,x)-p_t(x,y)|^2\leq R(x,y) \mathcal{E}(p_t(x,\cdot),p_t(x,\cdot))\leq R(x,y)\frac{p_t(x,x)}{t}.$$
Thus,
\begin{eqnarray*}
p_t(x,y)&\geq& p_t(x,x)-|p_t(x,x)-p_t(x,y)|\\
&\geq &p_t(x,x)\left( 1-\sqrt{\frac{R(x,y)}{tp_t(x,x)}}\right)\\
&\geq&\frac{1}{2}p_t(x,x),
\end{eqnarray*}
whenever $4R(x,y)\leq tp_t(x,x)$. Consequently, the result may be obtained by applying the on-diagonal lower bound obtained in Proposition \ref{ondiaglowerhk}.
\end{proof}

To prove the full lower bound we shall assume the chaining condition as defined in Section \ref{offdiagstate}. We shall use the standard chaining argument to extend the near diagonal lower bound to the full bound. The main complication caused by the perturbations is in choosing a suitable number of pieces to break the path into. The aim of the following lemma is to check that the number that we do choose is sensibly defined.

\begin{lem}\label{pathparts} Fix $c_{1}$. Let $x,y\in X$ and $t>0$. If we define $N=N(x,y,t)$ by
$$N:=\inf\{n\in \mathbb{N}:\:\frac{R(x,y)}{n}\leq c_{1}h^{-1}(t/n)g(h^{-1}(t/n))^{\theta_1}\},$$
then $N$ is well-defined and finite for each pair $x,y\in X$.
\end{lem}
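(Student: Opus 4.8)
The plan is to show that the set over which the infimum defining $N$ is taken is non-empty, from which finiteness is automatic since $\mathbb{N}$ is well-ordered. Fix $x,y\in X$ and $t>0$, and write $R:=R(x,y)$. The quantity to control as $n\to\infty$ is the ratio
$$\rho(n):=\frac{R/n}{c_{1}h^{-1}(t/n)\,g(h^{-1}(t/n))^{\theta_1}},$$
and it suffices to prove $\rho(n)\le 1$ for all sufficiently large $n$, equivalently $\rho(n)\to 0$.

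First I would estimate the denominator from below. As $n\to\infty$ we have $t/n\to 0$, so $h^{-1}(t/n)\to 0$; the point is to get a polynomial lower bound on $h^{-1}(t/n)g(h^{-1}(t/n))^{\theta_1}$ in terms of $t/n$. Using the doubling bound from Lemma \ref{volumegrowthlemma} (with $\Lambda\ge 1$) one gets $h(r)=rV(r)\le C_u r^{1+\beta_u}V(1)$ for $r\le 1$ up to constants, hence $h^{-1}(s)\ge c\, s^{1/(1+\beta_u)}$ for small $s$. Combining this with the bound $g(r)^{-1}=O(r^{-2\varepsilon})$ from Section \ref{framework} (so $g(r)^{\theta_1}\ge c\, r^{2\varepsilon\theta_1}$ for small $r$), we obtain, for $n$ large,
$$h^{-1}(t/n)\,g\bigl(h^{-1}(t/n)\bigr)^{\theta_1}\ \ge\ c\,(t/n)^{1/(1+\beta_u)}\cdot (t/n)^{2\varepsilon\theta_1/(1+\beta_u)}\ =\ c\,(t/n)^{(1+2\varepsilon\theta_1)/(1+\beta_u)}.$$
Therefore
$$\rho(n)\ \le\ \frac{R/n}{c\,c_{1}\,(t/n)^{(1+2\varepsilon\theta_1)/(1+\beta_u)}}\ =\ c'\,R\,t^{-(1+2\varepsilon\theta_1)/(1+\beta_u)}\ n^{\,(1+2\varepsilon\theta_1)/(1+\beta_u)-1}.$$
The exponent of $n$ is negative precisely when $1+2\varepsilon\theta_1<1+\beta_u$, i.e. $2\varepsilon\theta_1<\beta_u$; this holds because $\theta_1$ satisfies (\ref{theta1cond2}), which forces $\theta_1<\beta_l/(2\varepsilon)\le\beta_u/(2\varepsilon)$. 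Hence $\rho(n)\to 0$, so there exists $n$ with $\rho(n)\le 1$, and $N$ is well-defined and finite.

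The main obstacle, and the only slightly delicate point, is pinning down the polynomial lower bound on $h^{-1}$ near $0$ cleanly and making sure the exponent comparison uses exactly the part of (\ref{theta1cond2}) that bounds $\theta_1$ above by $\beta_l/(2\varepsilon)$ (this upper bound on $\theta_1$, absent from (\ref{theta1cond}), is precisely why the stronger constraint (\ref{betaepscondition2}) was imposed in the off-diagonal section). One should also remark that the argument does not depend on $c_{1}$ or on $R$ being small — they only affect how large $n$ must be taken — and that when $R_X'<\infty$ one restricts attention to $n$ large enough that $h^{-1}(t/n)$ lies in the range where the doubling/anti-doubling estimates apply; since we only need the conclusion for large $n$, no issue arises.
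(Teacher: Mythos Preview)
Your overall strategy is sound and matches the paper's idea --- reduce to showing that the right-hand side of the defining inequality eventually dominates $R/n$ --- but the key inequality you extract from the doubling lemma is in the wrong direction. Lemma~\ref{volumegrowthlemma} says $V(\Lambda r)\le C_u\Lambda^{\beta_u}V(r)$ for $\Lambda\ge 1$; taking $\Lambda=1/r$ (for $r\le 1$) gives $V(1)\le C_u r^{-\beta_u}V(r)$, i.e.\ a \emph{lower} bound $V(r)\ge C_u^{-1}r^{\beta_u}V(1)$, hence $h(r)\ge c\,r^{1+\beta_u}$ and so $h^{-1}(s)\le c'\,s^{1/(1+\beta_u)}$. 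That is the opposite of what you need, and doubling alone cannot give you a polynomial \emph{upper} bound on $V$ (equivalently, a lower bound on $h^{-1}$) near $0$.

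The tool that does the job is the anti-doubling estimate, Lemma~\ref{volumegrowthlemma2}: for $\lambda\le 1$ and small $r$, $V(\lambda r)\le C_l\lambda^{\beta_l}V(r)$, which yields $V(s)\le C\,s^{\beta_l}$ for small $s$, hence $h^{-1}(s)\ge c\,s^{1/(1+\beta_l)}$. Plugging this into your computation (with $\beta_l$ in place of $\beta_u$) gives
\[
\rho(n)\ \le\ C\,n^{(1+2\varepsilon\theta_1)/(1+\beta_l)-1},
\]
and the exponent is negative precisely when $2\varepsilon\theta_1<\beta_l$, which is exactly the upper bound on $\theta_1$ in (\ref{theta1cond2}). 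This is also how the paper proceeds: it rewrites the condition as $R/t\le c_1 g(h^{-1}(t/n))^{\theta_1}/V(h^{-1}(t/n))$ and then uses Lemma~\ref{volumegrowthlemma2} to show $V(r)g(r)^{-\theta_1}\to 0$ via $rg(r)^{-\theta_1/\beta_l}=O(r^{1-2\varepsilon\theta_1/\beta_l})\to 0$. So once you swap in the anti-doubling lemma and replace $\beta_u$ by $\beta_l$, your argument is correct and essentially the same as the paper's.
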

\begin{proof} Note first that $h^{-1}(t)/t=1/V(h^{-1}(t))$, so we can rewrite $N$ as
$$N=\inf\{n\in \mathbb{N}:\:\frac{R(x,y)}{t}\leq \frac{c_{1}}{V(h^{-1}(t/n))}g(h^{-1}(t/n))^{\theta_1}\}.$$
It is clear that $h^{-1}(t/n)\rightarrow 0$ as $n\rightarrow \infty$ and so, to prove the lemma, it suffices to show that $V(r)g(r)^{-\theta_1}\rightarrow 0$ as $r\rightarrow 0$. By Lemma \ref{volumegrowthlemma2} we have
$$V(r)g(r)^{-\theta_1}\leq C_l V(rg(r)^{-\theta_1/\beta_l}),$$
for $rg(r)^{-\theta_1/\beta_l}\leq R_X'$. We note that, using the assumptions of Sections \ref{framework} and \ref{offdiagstate}, we have $rg(r)^{-\theta_1/\beta_l}=O(r^{1-2\varepsilon\theta_1/\beta_l})\rightarrow 0$ as $r\rightarrow 0$, and so the result does indeed hold.
\end{proof}

We are now ready to state and prove the full lower bound. We now assume that the chaining condition, $(CC)$, holds.

\begin{propn} \label{offdiaglowerprop} There exist constants $t_0>0$ and $c_{1}, c_{2}$ such that, if $x,y\in X$, $t\in(0,t_0)$,
$$p_t(x,y)\geq  c_{1}\frac{h^{-1}(t)}{t} g(h^{-1}(t))^{\theta_1}e^{-c_{2} \frac{R}{V^{-1}(t/R)}g(V^{-1}(t/R))^{-\theta_2}},$$
where $R=R(x,y)$. If $R_X'=\infty$ then we may take $t_0=\infty$, otherwise $t_0$ will be finite.
\end{propn}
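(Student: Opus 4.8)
The plan is to use the standard chaining argument that converts the near-diagonal lower bound of Lemma~\ref{nle} into a full off-diagonal lower bound. Fix $x,y\in X$, $t\in(0,t_0)$, and write $R=R(x,y)$. Let $N=N(x,y,t)$ be the number from Lemma~\ref{pathparts} (with the constant $c_1$ there taken to be the constant appearing in Lemma~\ref{nle}), so that $N$ is well-defined and finite, and $R/N\leq c_1 h^{-1}(t/N)g(h^{-1}(t/N))^{\theta_1}$. By $(CC)$, choose a chain $x_0=x,x_1,\dots,x_N=y$ with $R(x_{i-1},x_i)\leq c\,R/N$ for each $i$. Shrinking $c_1$ slightly in the definition of $N$ (to absorb the chaining constant $c$), we may assume $R(x_{i-1},x_i)\leq c_1 h^{-1}(t/N)g(h^{-1}(t/N))^{\theta_1}$ for each $i$, so Lemma~\ref{nle} applies at time $t/N$ to each consecutive pair. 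The semigroup (Chapman--Kolmogorov) identity gives
$$p_t(x,y)=\int_{X^{N-1}}\prod_{i=1}^N p_{t/N}(z_{i-1},z_i)\,\mu(dz_1)\cdots\mu(dz_{N-1}),$$
with $z_0=x$, $z_N=y$. Restricting the integral to the product of balls $B(x_i, \rho_i)$ for a suitably small radius $\rho_i$ (chosen so that for $z_i\in B(x_i,\rho_i)$ one still has $R(z_{i-1},z_i)$ within the range where Lemma~\ref{nle} applies), and using the volume lower bound $\mu(B(x_i,\rho_i))\geq V_l(\rho_i)\geq c\,V_l(h^{-1}(t/N)g(h^{-1}(t/N))^{\theta_1})$ together with $p_{t/N}\geq c_2 \frac{h^{-1}(t/N)}{t/N}g(h^{-1}(t/N))^{\theta_1}$ on that set, produces a bound of the form
$$p_t(x,y)\geq \left(c_2\frac{h^{-1}(t/N)}{t/N}g(h^{-1}(t/N))^{\theta_1}\right)\prod_{i=1}^{N-1}\left(c\,\frac{h^{-1}(t/N)}{t/N}g(h^{-1}(t/N))^{\theta_1}\,V_l(h^{-1}(t/N)g(h^{-1}(t/N))^{\theta_1})\right).$$

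The next step is to simplify each factor in the product. Using $h^{-1}(s)/s=1/V(h^{-1}(s))$, the quantity $\frac{h^{-1}(t/N)}{t/N}V_l(h^{-1}(t/N)g(h^{-1}(t/N))^{\theta_1})$ equals $\frac{V_l(h^{-1}(t/N)g(h^{-1}(t/N))^{\theta_1})}{V(h^{-1}(t/N))}$; applying Lemma~\ref{volumegrowthlemma}, Lemma~\ref{concavebounds} and (\ref{gbounds}) one bounds this below by $c\,g(h^{-1}(t/N))^{\kappa}$ for an explicit exponent $\kappa$ depending on $b,\beta_u,\theta_1$. Hence each of the $N-1$ product factors is at least $c\,g(h^{-1}(t/N))^{\kappa'}$ for some exponent $\kappa'$, so
$$p_t(x,y)\geq c_2\frac{h^{-1}(t/N)}{t/N}g(h^{-1}(t/N))^{\theta_1}\cdot\left(c\,g(h^{-1}(t/N))^{\kappa'}\right)^{N-1}=c_2\frac{h^{-1}(t/N)}{t/N}g(h^{-1}(t/N))^{\theta_1}\,e^{-(N-1)\left(\ln(1/c)+\kappa'\ln(1/g(h^{-1}(t/N)))\right)}.$$
Since $p_t(x,y)$ and $\frac{h^{-1}(s)}{s}g(h^{-1}(s))^{\theta_1}$ are monotone in the relevant way and $t/N\le t$, the prefactor can be replaced by $c_3\frac{h^{-1}(t)}{t}g(h^{-1}(t))^{\theta_1}$ after controlling the ratio $\big(\tfrac{h^{-1}(t/N)}{t/N}g(h^{-1}(t/N))^{\theta_1}\big)\big/\big(\tfrac{h^{-1}(t)}{t}g(h^{-1}(t))^{\theta_1}\big)$; this ratio is handled by Lemmas~\ref{volumegrowthlemma2},~\ref{concavebounds} and is at worst a power of $g$, which gets absorbed into the exponential term (this is exactly where the anti-doubling hypothesis (\ref{volumegrowthlower}) and the bound (\ref{betaepscondition2}) on $b,\varepsilon$ are needed, and where the final exponent $\theta_2$ must be taken larger than $\theta_1$, consistent with (\ref{theta2cond})).

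The remaining, and main, task is to estimate $N$ and $g(h^{-1}(t/N))$ in terms of $R$, $t$, $V^{-1}(t/R)$ and $g(V^{-1}(t/R))$, so that the exponent $(N-1)\big(\ln(1/c)+\kappa'\ln(1/g(h^{-1}(t/N)))\big)$ becomes $c_4\frac{R}{V^{-1}(t/R)}g(V^{-1}(t/R))^{-\theta_2}$. By minimality of $N$, $\frac{R}{N-1}> c_1 h^{-1}(t/(N-1))g(h^{-1}(t/(N-1)))^{\theta_1}$, which together with $h^{-1}(s)/s=1/V(h^{-1}(s))$ gives a two-sided comparison of the form $N\asymp \frac{R}{V^{-1}(t/R)}\cdot(\text{correction powers of }g)$, after using the doubling/anti-doubling of $V$ to pass between $h^{-1}(t/N)$ and $V^{-1}(t/R)$. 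Tracking these correction powers carefully — they all turn out to be powers of $g$ evaluated at comparable arguments, hence controllable by (\ref{gbounds}) and the bound $g(r)^{-1}=O(r^{-2\varepsilon})$ — yields $N g(h^{-1}(t/N))^{\theta_1}\cdot(\text{powers of }g)\asymp \frac{R}{V^{-1}(t/R)}$, so that $N\le c\,\frac{R}{V^{-1}(t/R)}g(V^{-1}(t/R))^{-\theta_2}$ once $\theta_2$ is chosen large enough (satisfying (\ref{theta2cond})). Substituting this into the exponential completes the proof for $t$ small; the restriction to $t\in(0,t_0)$ with $t_0$ finite when $R_X'<\infty$ comes from the ranges in which Lemmas~\ref{volumegrowthlemma2} and~\ref{tailprobnice}-type estimates are valid, and the passage $t_0=\infty$ when $R_X'=\infty$ is then immediate. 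I expect the bookkeeping of the accumulated powers of $g$ through the product and through the estimate of $N$ to be the principal obstacle: each application of Lemma~\ref{concavebounds} or (\ref{gbounds}) costs a factor $g^{\,O(b)}$, and one must verify that the total is dominated by $g^{-\theta_2}$ with $\theta_2$ finite, which is precisely what the smallness conditions (\ref{betaepscondition2}) on $b,\varepsilon$ and the lower bound (\ref{theta2cond}) on $\theta_2$ are engineered to guarantee.
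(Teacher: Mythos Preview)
Your approach is essentially the same chaining argument the paper uses: apply Lemma~\ref{nle} along a $(CC)$-chain of length $N$ from Lemma~\ref{pathparts}, integrate Chapman--Kolmogorov over small balls, collapse the product to an exponential in $N$ and $\ln g(h^{-1}(t/N))$, and then estimate $N$ via its minimality together with anti-doubling to convert $h^{-1}(t/N)$ into $V^{-1}(t/R)$. One simplification the paper exploits that you can adopt: after separating off the near-diagonal case $R\le c\,h^{-1}(t)g(h^{-1}(t))^{\theta_1}$ (handled directly by Lemma~\ref{nle}), the prefactor comparison $\tfrac{Nh^{-1}(t/N)}{t}g(h^{-1}(t/N))^{\theta_1}\ge c'\tfrac{h^{-1}(t)}{t}g(h^{-1}(t))^{\theta_1}$ follows immediately from the defining inequality for $N$ combined with $R>c\,h^{-1}(t)g(h^{-1}(t))^{\theta_1}$, so no ratio needs to be absorbed into the exponential at that step.
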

\begin{proof} Let $x,y\in X$ and $R=R(x,y)$. Now, there exists a constant $c_3$ such that, if $R\leq c_{3} h^{-1}(t) g(h^{-1}(t))^{\theta_1}$ then we have the result by Lemma \ref{nle} immediately. Thus, we need only consider the case $R> c_{3} h^{-1}(t) g(h^{-1}(t))^{\theta_1}$. We shall use a standard chaining argument using the previous lemma to select the length of the path we shall use. Define
$$N=\inf\{n\in \mathbb{N}:\:\frac{R(x,y)}{n}\leq \frac{c_{3}}{3c_{4}}h^{-1}(t/n)g(h^{-1}(t/n))^{\theta_1}\},$$
where $c_4$ is the constant that appears in the chaining condition. Lemma \ref{pathparts} and the assumption on $R$ means that $N\in(1,\infty)$. By the chaining condition we can find a path $x=x_0,x_1,\dots,x_N=y$ such that
$$R(x_{i-1},x_i)\leq\frac{c_{4}R}{N},\hspace{20pt}i=1,\dots,N.$$
If we set $\delta= c_{3}h^{-1}(t/N)g(h^{-1}(t/N))^{\theta_1}$, then by the definition of $N$, this inequality implies that $R(x_{i-1},x_i)\leq \delta/3$, for $i=1,\dots,N$. Thus, if $z_i\in B(x_i, \delta/3)$, we have
$$R(z_{i-1},z_i)\leq\delta,\hspace{20pt}i=1,\dots,N,$$
and so we may apply the near diagonal estimate to obtain
\begin{equation}\label{eq5}
p_{t/N}(z_{i-1},z_i)\geq c_{5} \frac{N h^{-1}(t/N)}{t} g(h^{-1}(t/N))^{\theta_1}.
\end{equation}

This is the first ingredient that we shall require to apply the chaining argument. The other is a lower bound on the measures of the balls $B(x_i, \delta/3)$. Using the assumption (\ref{volumegrowthcondition}),
\begin{eqnarray}
V(x_i, \delta/3)&\geq& V_l(\delta/3)\nonumber\\
&=& V_l(c_{3}h^{-1}(t/N)g(h^{-1}(t/N))^{\theta_1}/3)\nonumber\\
&\geq& c_{6} V(h^{-1}(t/N))g(h^{-1}(t/N))^{1+\theta_1(b+\beta_u)},\label{eq6}
\end{eqnarray}
where we have applied Lemmas \ref{volumegrowthlemma} and \ref{concavebounds} to obtain the second inequality.

By using the Chapman-Kolmogorov equation for the transition densities of the process $X$ we obtain the following chaining inequality
$$p_t(x,y)\geq \int_{B(x_1,\:\delta/3)}\mu(dz_1)\dots\int_{B(x_{N-1},\:\delta/3)}\mu(dz_{N-1})\prod_{i=1}^N p_{t/N}(z_{i-1},z_i).$$
If we then combine this with the bounds at (\ref{eq5}) and (\ref{eq6}) we obtain
\begin{eqnarray}
p_t(x,y)&\geq& c_{5} \frac{N h^{-1}(t/N)}{t} g(h^{-1}(t/N))^{\theta_1}\nonumber\\
& &\hspace{20pt}\times \left(c_{5} c_{6} g(h^{-1}(t/N))^{1+\theta_1(b+\beta_u+1)}\right)^{N-1},\nonumber
\end{eqnarray}
where we have used the identity $h^{-1}(t)/t=1/V(h^{-1}(t))$. The definition of $N$ and the assumption that $R> c_{3} h^{-1}(t) g(h^{-1}(t))^{\theta_1}$ may be combined to give
$$\frac{N h^{-1}(t/N)}{t} g(h^{-1}(t/N))^{\theta_1}\geq 3c_{4}\frac{h^{-1}(t)}{t} g(h^{-1}(t))^{\theta_1},$$
yielding
\begin{equation}\label{eq8}
p_t(x,y)\geq c_{7}\frac{h^{-1}(t)}{t} g(h^{-1}(t))^{\theta_1}e^{-c_{8}N(1-c_{9}\ln g (h^{-1}(t/N)))}.
\end{equation}

To complete the argument we look for bounds on the terms involving  $N$. Since we know that $N>1$ we can deduce, because $h^{-1}(t)$ is increasing,
$$\frac{R}{N}=\frac{R}{N-1}\frac{N-1}{N}\geq \frac{c_{3}}{6c_{4}}h^{-1}(t/N)g(h^{-1}(t/N))^{\theta_1},$$
which we can rewrite as
\begin{equation}\label{eq9}
\frac{t}{R}\leq c_{10} V(h^{-1}(t/N))g(h^{-1}(t/N))^{-\theta_1}.
\end{equation}
Since $g(r)^{-1}=O(r^{-2\varepsilon})$ and $2\varepsilon\theta_1/\beta_l<1$, we can find a $t_0>0$ such that
\begin{equation}\label{eq10}
c_{11}h^{-1}(t)g(h^{-1}(t))^{-\theta_1/\beta_l}\leq R_X',\hspace{20pt}\forall t<t_0,
\end{equation}
where $c_{11}:=\max\{C_l^{1/\beta_l},(C_lc_{10})^{1/\beta_l}\}$. Note that if $R_X'=\infty$ we may take $t_0=\infty$. Clearly this also implies that $c_{10}h^{-1}(t/N)g(h^{-1}(t/N))^{-\theta_1/\beta_l}\leq R_X'$, for $t<t_0$. Thus, applying Lemma \ref{volumegrowthlemma2} to (\ref{eq9}) gives
\begin{equation}\label{eq11}
\frac{t}{R} \leq V(c_{11}h^{-1}(t/N)g(h^{-1}(t/N))^{-\theta_1/\beta_l}),
\end{equation}
and so
\begin{equation}\label{thisisa}
g(\tilde{V})\leq c_{12}g(h^{-1}(t/N))^{1-2b\theta_1/\beta_l},
\end{equation}
where $\tilde{V}:=V^{-1}(t/R)$. By using this in (\ref{eq9}) we find
$$V(h^{-1}(t/N))\geq c_{13} \frac{t}{R} g(\tilde{V})^{\theta_1\beta_l/(\beta_l-2b\theta_1)},$$
and moreover, equations (\ref{eq10}) and (\ref{eq11}) imply that $\tilde{V}\leq R_X'$. These facts allow us to deduce, after some manipulation and the use of Lemma \ref{volumegrowthlemma2}, that
$$h^{-1}(t/N)\geq V^{-1}(c_{13} \frac{t}{R} g(\tilde{V})^{\theta_1\beta_l/(\beta_l-2b\theta_1)})\geq c_{14} \tilde{V} g(\tilde{V})^{\theta_1/(\beta_l-2b\theta_1)}.$$
Consequently,
$$\frac{t}{N}\geq c_{13}c_{14} \frac{t}{R}\tilde{V} g(\tilde{V})^{\theta_1(\beta_l+1)/(\beta_l-2b\theta_1)},$$
which is equivalent to
\begin{equation}\label{eq13}
 N\leq c_{15} \frac{R}{\tilde{V}}g(\tilde{V})^{-\theta_1(\beta_l+1)/(\beta_l-2b\theta_1)}.
\end{equation}
Substituting the bounds of (\ref{thisisa}) and (\ref{eq13}) into the lower bound we established at (\ref{eq8}) yields
\begin{eqnarray*}
p_t(x,y) &\geq& c_{7}\frac{h^{-1}(t)}{t} g(h^{-1}(t))^{\theta_1}\\
& &\hspace{20pt}\times e^{-c_{16}\frac{R}{\tilde{V}}g(\tilde{V})^{-\theta_1(\beta_l+1)/(\beta_l-2b\theta_1)}(1-c_{17}\ln g (\tilde{V}))},\\
&\geq& c_{7}\frac{h^{-1}(t)}{t} g(h^{-1}(t))^{\theta_1}e^{-c_{18}\frac{R}{\tilde{V}}g(\tilde{V})^{-\theta_2}},\hspace{60pt}\forall t<t_0,
\end{eqnarray*}
which is the desired result.
\end{proof}

\section{Local fluctuations}\label{local}

In \cite{HamKum}, Hambly and Kumagai demonstrated that for certain random recursive Sierpinski gaskets, as well as spatial fluctuations, the heat kernel will undergo fluctuations in time $\mu$-almost-everywhere in $X$. In this section, we look to generalise this result by showing that these local fluctuations in the heat kernel result from local fluctuations in the measure.

Again, we shall be working with the measure-metric space $(X,d,\mu)$ and the volume function $V$. We shall denote the local fluctuations by $\tilde{f}_l$ and $\tilde{f}_u$ and assume that these satisfy the same properties as did $f_l$ and $f_u$, respectively. In fact, the results proved here may be obtained using slightly weaker assumptions, but we omit these for brevity. We shall use $\tilde{V}_l(r)$, $\tilde{V}_u(r)$, $\tilde{h}_l(r)$ and $\tilde{h}_u(r)$ to notate $\tilde{f}_l(r)V(r)$, $\tilde{f}_u(r) V(r)$, $r\tilde{V}_l(r)$ and $r\tilde{V}_u(r)$, respectively.

For the following theorem, we make only point-wise assumptions on the volume growth. Because of this, we cannot establish a lower bound on $R(x, B(x,r)^c)$. As we need some kind of global control on this, we simply take as an assumption that it is bounded below by a multiple of $r$. Note that this is a stricter condition than the one established at Lemma \ref{reslemma} when we had global bounds on the measure.

\begin{thm} \label{localres} If
$$0<\liminf_{r\rightarrow0} \frac{V(x,r)}{\tilde{V}_l(r)}<\infty,\hspace{20pt}0<\limsup_{r\rightarrow0} \frac{V(x,r)}{\tilde{V}_u(r)},$$
and
\begin{equation}\label{rescond}
0<\liminf_{r\rightarrow 0} \frac{R(x,B(x,r)^c)}{r}
\end{equation}
for $\mu$-almost-every $x\in X$; then
\begin{equation}\label{localinfres}
\liminf_{t\rightarrow0}\frac{tp_t(x,x)}{\tilde{h}_u^{-1}(t)}<\infty,
\end{equation}
and
\begin{equation}\label{localsupres}
0<\limsup_{t\rightarrow0} \frac{tp_t(x,x)}{\tilde{h}_l^{-1}(t)}<\infty,
\end{equation}
for $\mu$-almost-every $x\in X$.
\end{thm}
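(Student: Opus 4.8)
The plan is to re-run, pointwise in $x$, the machinery behind Theorem \ref{fluctres}, the key point being that the three hypotheses encode, for $\mu$-almost every $x$, the following facts with constants allowed to depend on $x$ (which is harmless, since we never integrate in the space variable). From $0<\liminf_{r\rightarrow 0}V(x,r)/\tilde{V}_l(r)$ we obtain a \emph{pointwise} volume lower bound $V(x,r)\geq c_x\tilde{V}_l(r)$ valid for all small $r$; this is what replaces the global bound $V(x,r)\geq V_l(r)$ of (\ref{volumegrowthcondition}). From $\liminf_{r\rightarrow 0}V(x,r)/\tilde{V}_l(r)<\infty$ we obtain a sequence $r_n=r_n(x)\downarrow 0$ with $V(x,r_n)\leq C_x\tilde{V}_l(r_n)$, which plays the role of the special sequence $(x_n,r_n)$ in the proof of Theorem \ref{fluctres}. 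From $0<\limsup_{r\rightarrow 0}V(x,r)/\tilde{V}_u(r)$ we obtain a sequence $\rho_n=\rho_n(x)\downarrow 0$ with $V(x,\rho_n)\geq c_x\tilde{V}_u(\rho_n)$. Finally, (\ref{rescond}) gives $R(x,B(x,r)^c)\geq c_x r$ for all small $r$, which must be imported as a hypothesis precisely because, without the global volume condition, Lemma \ref{reslemma} is unavailable. Throughout, Lemmas \ref{volumegrowthlemma} and \ref{concavebounds} applied to $\tilde{f}_l$ and $\tilde{f}_u$ let us move $x$-dependent factors through $\tilde{V}_l$, $\tilde{V}_u$, $\tilde{h}_l$, $\tilde{h}_u$ and their inverses; in particular $\tilde{h}_l^{-1}(\lambda s)\geq\lambda\,\tilde{h}_l^{-1}(s)$ (monotonicity of $\tilde f_l$ and $V$) and, similarly, $\tilde{h}_u^{-1}(\lambda s)\geq c\,\lambda^{1/(1-b)}\tilde{h}_u^{-1}(s)$ for $\lambda\in(0,1]$, the latter using $b<1$ from (\ref{betaepscondition1}).

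For (\ref{localinfres}): set $t_n:=2\rho_n V(x,\rho_n)$. Since (\ref{trandensupper2}) is pointwise, $p_{t_n}(x,x)\leq 2/V(x,\rho_n)$, so $t_n p_{t_n}(x,x)\leq 4\rho_n$; on the other hand $t_n\geq 2c_x\tilde{h}_u(\rho_n)$, whence $\tilde{h}_u^{-1}(t_n)\geq c_x'\rho_n$ by the rescaling above, giving $t_n p_{t_n}(x,x)/\tilde{h}_u^{-1}(t_n)\leq 4/c_x'$. As $t_n\rightarrow 0$, this bounds the required liminf. The upper bound in (\ref{localsupres}) is obtained the same way but for every small $t$: choosing $r=r(x,t)$ with $2rV(x,r)=t$ (a generalised inverse if need be) and invoking (\ref{trandensupper2}) gives $tp_t(x,x)\leq 4r$, while $t=2rV(x,r)\geq 2c_x\tilde{h}_l(r)$ forces $r\leq\tilde{h}_l^{-1}(t)/(2c_x\wedge 1)$, so $tp_t(x,x)/\tilde{h}_l^{-1}(t)$ stays bounded by an $x$-dependent constant as $t\rightarrow 0$.

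The lower bound in (\ref{localsupres}) is where the real work lies, and it mirrors the proof of the left-hand inequality of (\ref{supres}), carried out along the sequence $r_n$ with each global ingredient replaced by its pointwise surrogate. First I would re-derive the exit-time lower bound of Proposition \ref{expectedexittimes}: the Green kernel construction, the identities (\ref{green1})--(\ref{green4}), and the inequality $(1-g_B(x,y)/g_B(x,x))^2\leq R(x,y)/R(x,B^c)$ require no volume hypothesis, so using (\ref{rescond}) in place of Lemma \ref{reslemma} one finds $g_B(x,y)\geq\tfrac12 g_B(x,x)$ for $y\in B(x,c_x r)$, and then the pointwise bound $V(x,\cdot)\geq c_x\tilde{V}_l(\cdot)$ together with Lemmas \ref{volumegrowthlemma} and \ref{concavebounds} yields $E^xT_{B(x,r_n)}\geq c_x\tilde{h}_l(r_n)$. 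The complementary bound $E^yT_{B(x,r_n)}\leq r_n V(x,r_n)\leq C_x\tilde{h}_l(r_n)$ holds for all starting points $y$ by the general part of Proposition \ref{expectedexittimes}. Feeding these into the Markov-property estimate (\ref{eq2}) gives $P^x(T_{B(x,r_n)}\leq t)\leq 1-\delta_x+t/(C_x\tilde{h}_l(r_n))$ with $\delta_x\in(0,1)$, so for $t_n:=\tfrac12\delta_x C_x\tilde{h}_l(r_n)$ we get $P^x(T_{B(x,r_n)}>t_n)\geq\tfrac12\delta_x>0$. Applying the Cauchy--Schwarz bound (\ref{ondiaglower}) at $(x,r_n,2t_n)$ then gives $p_{2t_n}(x,x)\geq c_x/V(x,r_n)\geq c_x/(C_x\tilde{V}_l(r_n))$, and since $2t_n$ is comparable to $\tilde{h}_l(r_n)$ the rescaling lemmas convert this into $2t_n p_{2t_n}(x,x)\geq c_x''\,\tilde{h}_l^{-1}(2t_n)$; as $t_n\rightarrow 0$ this proves $0<\limsup_{t\rightarrow 0}tp_t(x,x)/\tilde{h}_l^{-1}(t)$. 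Intersecting the three co-null sets on which the hypotheses hold completes the argument.

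The main obstacle is conceptual rather than computational. Because the global volume condition (\ref{volumegrowthcondition}) is not assumed here, neither Lemma \ref{reslemma} nor the on-diagonal upper bound of Proposition \ref{ondiagupper} is available: the resistance lower bound has to be taken as the hypothesis (\ref{rescond}), and the on-diagonal upper bounds must be recovered directly from the pointwise contraction estimate (\ref{trandensupper2}) rather than from the ODE argument underlying Proposition \ref{ondiagupper}, which does not localise (it needs a volume lower bound uniform in $x$). Once this is recognised, the remaining difficulty is only the bookkeeping of the $x$-dependent constants, and this causes no trouble because every ingredient used --- the Green-kernel identities, the exit-time estimates, and the Cauchy--Schwarz inequality --- is genuinely pointwise in the space variable.
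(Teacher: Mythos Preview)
Your proposal is correct and follows essentially the same route as the paper's proof: both use the pointwise inequality (\ref{trandensupper2}) for (\ref{localinfres}) and the upper half of (\ref{localsupres}), and for the lower half of (\ref{localsupres}) both replace Lemma \ref{reslemma} by the hypothesis (\ref{rescond}), rerun the Green-kernel argument of Proposition \ref{expectedexittimes} along a sequence $r_n\downarrow 0$ with $V(x,r_n)\leq C_x\tilde V_l(r_n)$, and finish with the Markov-property and Cauchy--Schwarz steps exactly as in the proof of Theorem \ref{fluctres}. Your write-up simply makes explicit the $x$-dependent constants and the rescaling of $\tilde h_l^{-1}$, $\tilde h_u^{-1}$ that the paper leaves implicit.
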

\begin{proof} The bound at (\ref{localinfres}) is proved by applying the inequality at (\ref{trandensupper2}) in exactly the same way as in the proof of the corresponding global bound. A similar argument is also used to prove the upper bound of (\ref{localsupres}).

The assumption on $R$ at (\ref{rescond}) allows us to deduce that for $\mu$-almost-every $x\in X$, there exists a sequence $r_n\rightarrow 0$ such that
$$E^{x}T_{B(x,r_n)}\geq c_{1} \tilde{h}_l(r_n)\hspace{20pt}\forall n\in \mathbb{N},$$
and
$$E^{y}T_{B(x,r_n)}\leq  c_{2} \tilde{h}_l(r_n),\hspace{20pt}\forall y\in X, n \in\mathbb{N},$$
by following the argument of Proposition \ref{expectedexittimes}. The result at (\ref{localsupres}) follows from this by applying the Markov property of our process and the Cauchy-Schwarz inequality as we did for the analogous global bound.
\end{proof}
\begin{rem} Using the techniques of this article, it is not enough to assume that
\[\limsup_{r\rightarrow0}(V(x,r)/\tilde{V}_u(r))<\infty\]
to establish a lower bound on $p_t(x,x)$ that holds for all small $t$. The problem arises because we are unable to emulate the chaining argument that was used in Proposition \ref{tailprob} to establish an exponential tail for the distribution of the exit time from a ball.
\end{rem}
\begin{rem} Similar to the remark made after Theorem \ref{fluctres}, we note there are non-trivial local fluctuations in the measure if and only if $\tilde{V}_u(r)/\tilde{V}_l(r)\rightarrow \infty$ as $r\rightarrow 0$. This is equivalent to $\tilde{h}_l^{-1}(t)/\tilde{h}_u^{-1}(t)\rightarrow \infty$ as $t\rightarrow0$, which implies that there are non-trivial local fluctuations in the heat kernel.\end{rem}

\section{Examples}\label{examples}

In this section, to illustrate the results, we look at two specific examples of correction terms and present the conclusions for two particular random sets. In the Sections \ref{polycorr} and \ref{logcorr} we shall take $V(r)=r^{\alpha}$ for some $\alpha>0$, so that $\beta_l=\beta_u=\alpha$. For simplicity, we assume that $R_X=\infty$ and the chaining condition holds. In this case, we have $V^{-1}(t)=t^{1/\alpha}$ and $h^{-1}(t)=t^{1/(1+\alpha)}$. Furthermore, in the case of uniform volume growth with volume doubling, we can use the results of Kumagai to show that
$$c_{1}t^{-\frac{\alpha}{\alpha+1}}e^{-c_{2}\left(\frac{R^{\alpha+1}}{t}\right)^{1/\alpha}} \leq p_t(x,y)\leq c_{3}t^{-\frac{\alpha}{\alpha+1}}e^{-c_{4}\left(\frac{R^{\alpha+1}}{t}\right)^{1/\alpha}}$$
for this choice of volume growth function.

\subsection{Polynomial corrections}\label{polycorr}

We first discuss the case of arbitrary polynomial corrections. We shall assume that given $\delta>0$, there exist constants $c_{1}, c_{2}$ such that
$$c_{1}r^\alpha(r^\delta\wedge 1)\leq V(x,r)\leq c_{2}r^\alpha(r^{-\delta}\vee 1),\hspace{20pt}\forall x\in X, r\geq0,$$
so that $f_l(r)=c_{1}(r^\delta\wedge 1)$ and $f_u(r)=c_{2}(r^{-\delta}\vee 1)$. If we set $\varepsilon = b = \delta$, then $f_l$, $f_u$ satisfy the conditions for the full bounds when $\delta<\alpha/8(3+\alpha)^2$. We can then also choose
$$\theta_1=4(2+\alpha)^2,\hspace{10pt}\theta_2=\frac{4(2+\alpha)^3}{\alpha-8\delta(2+\alpha)^2},\hspace{10pt}\theta_3=(3+2\delta+2\alpha)(1+2\alpha^{-1}),$$
and apply Theorem \ref{fullbounds} to obtain that
$$c_{3}t^{-\frac{\alpha-2\delta\theta_1}{\alpha+1}}e^{-c_{4}\left(\frac{R^{1+\alpha-2\delta\theta_2}}{t^{1-2\delta\theta_2}}\right)^{1/\alpha}} \leq p_t(x,y)\leq c_{5}t^{-\frac{\alpha+\delta}{\alpha+1}}e^{-c_{6}\left(\frac{R^{1+\alpha+2\delta\theta_3}}{t^{1+2\delta\theta_3}}\right)^{1/\alpha}},$$
for appropriate $t,x,y$. We note that $\delta, 2\delta\theta_1, 2\delta\theta_2, 2\delta\theta_3 \rightarrow 0$ as $\delta\rightarrow 0$, and so, by taking $\delta$ small enough, we can write down bounds with arbitrarily small polynomial correction terms.

\subsection{Logarithmic corrections}\label{logcorr}

Assume now that
\begin{equation}\label{loginfcond}
0<\liminf_{r\rightarrow0}\inf_{x\in X} \frac{V(x,r)}{V(r)(\ln r^{-1})^{-a_1}} \leq \limsup_{r\rightarrow0}\inf_{x\in X} \frac{V(x,r)}{V(r)(\ln r^{-1})^{-a_1}}<\infty,
\end{equation}
and
\begin{equation}\label{logsupcond}
0<\liminf_{r\rightarrow0}\sup_{x\in X} \frac{V(x,r)}{V(r)(\ln r^{-1})^{a_2}}\leq \limsup_{r\rightarrow0}\sup_{x\in X} \frac{V(x,r)}{V(r)(\ln r^{-1})^{a_2}}<\infty;
\end{equation}
for some $a_1, a_2\in(0,\infty)$. As we noted in the introduction, this is an example that arises naturally in the random recursive fractal setting. We have $f_l(r)=c_{1}(\ln r^{-1})^{-a_1}$ and $f_u(r)= c_{2}(\ln r^{-1})^{a_2}$, which satisfy the conditions for any $\varepsilon, b>0$. Thus, by applying Theorem \ref{fullbounds} we can deduce full heat kernel bounds with $\theta_1, \theta_2,\theta_3$ arbitrarily close to the lower bounds of
$$\theta_1>(3+2\alpha)(2+\alpha),\:\theta_2>\frac{(3+2\alpha)(2+\alpha)(1+\alpha)}{\alpha},\:\theta_3>(3+2\alpha)(1+2\alpha^{-1}),$$
as long as $\theta_1, \theta_2$ satisfy (\ref{theta2cond}). Thus, our results show that the correction terms in the heat kernel will be of logarithmic order. In fact, because we know the functions explicitly, by repeating the same arguments as in previous sections more carefully, we can improve these exponents. Theorem \ref{fluctres} allows us to deduce that the on-diagonal part of the heat kernel satisfies
\begin{equation}\label{eqeq1}
0<\liminf_{t\rightarrow0}\inf_{x\in X} \frac{p_t(x,x)}{t^{-\frac{\alpha}{\alpha+1}}(\ln t^{-1})^{-\frac{\alpha(2\alpha+3)(\alpha+2)a_0+a_2}{\alpha+1}}},\hspace{20pt}
\end{equation}
$$\hspace{20pt}\limsup_{t\rightarrow0}\inf_{x\in X} \frac{p_t(x,x)}{t^{-\frac{\alpha}{\alpha+1}}(\ln t^{-1})^{-\frac{a_2}{\alpha+1}}}<\infty$$
and
$$0<\liminf_{t\rightarrow0}\sup_{x\in X} \frac{p_t(x,x)}{t^{-\frac{\alpha}{\alpha+1}}(\ln t^{-1})^{-\frac{a_1}{\alpha+1}}}\leq\limsup_{t\rightarrow0}\sup_{x\in X} \frac{p_t(x,x)}{t^{-\frac{\alpha}{\alpha+1}}(\ln t^{-1})^{-\frac{a_1}{\alpha+1}}}<\infty,$$
where $a_0:=a_1+a_2$, and we have sharpened the exponent $\theta_1$.

\subsection{Random recursive Sierpinski gaskets}\label{gaskets}

We now compare the above results for logarithmic corrections to those that are known to hold for the random recursive Sierpinski gasket described in \cite{HamJon}. The gasket does not satisfy the chaining condition, but since we do not need this for the on-diagonal results, our results still apply. As noted in the introduction, for this gasket, the results of \cite{HamJon} may be adapted to show there are fluctuations in the measure of resistance balls of the type described at (\ref{loginfcond}) and (\ref{logsupcond}) for some $a_1, a_2>0$.

Our results for the asymptotics of $\sup_{x\in X} p_t(x,x)$ are tight and agree with those found in \cite{HamKum} by Hambly and Kumagai for these random sets. We also have that the upper bound on $\inf_{x\in X} p_t(x,x)$ agrees with the result proved there. We observe that the heat kernel bounds obtained for this gasket in \cite{Hambly} imply that
$$0<\liminf_{t\rightarrow0}\inf_{x\in X} \frac{p_t(x,x)}{t^{-\frac{\alpha}{\alpha+1}}(\ln t^{-1})^{-\frac{\alpha a_0+a_2}{\alpha+1}}},$$
and so the lower bound at (\ref{eqeq1}) has a strictly worse exponent than is optimal. The main reason for this is that, because we have not taken into account the structure of the space, our lower bound on $R(x, B(x,r)^c)$ is not tight. Using results of \cite{Hambly}, we deduce that $c_{1}r\leq R(x, B(x,r)^c)$ for this gasket, whereas Lemma \ref{reslemma} only allows us to obtain $c_{2}r(\ln r^{-1})^{-2a_0}\leq R(x, B(x,r)^c)$.

We note that, because $c_{1}r\leq R(x, B(x,r)^c)$, the local measure results proved in \cite{HamJon} also may be adapted to enable us to apply Theorem \ref{localres} to demonstrate there are fluctuations in time for the heat kernel on this gasket with $\tilde{f}_l(r)=c_{3}(\ln\ln r^{-1})^{-a_1}$ and $\tilde{f}_u(r)=c_{4}(\ln\ln r^{-1})^{a_2}$. That fluctuations of this kind exist was first proved in \cite{HamKum}, and it may be readily observed that the bounds of Theorem \ref{localres} agree with the corresponding results of that paper. Finally, as was noted in the remark following Theorem \ref{localres}, we are unable to establish a local lower bound for $p_t(x,x)$ for small $t$ in the general case, whereas, by taking into account the specific structure of the sets involved, Hambly and Kumagai are able to do so in this particular example.

\subsection{Continuum random tree}

The continuum random tree is a significant example of a random dendrite, with connections to branching processes, graph theory and super-processes, see \cite{Aldous2} for an overview. Possessing a natural shortest path metric, it fits naturally into the resistance form framework, and so the problem of establishing good heat kernel bounds reduces to that of finding good measure bounds for the set. This is the aim of \cite{Croydoncrt}, in which logarithmic global measure fluctuations about a leading order $r^2$ term are demonstrated. Recall, for a dendrite the resistance metric is actual identical to the original one if this is a shortest path metric, and so the conclusions drawn there may be taken to be for resistance balls. Thus, the results of Theorem \ref{fluctres} and Theorem \ref{fullbounds} both apply, yielding full heat kernel estimates and global logarithmic fluctuations in the on-diagonal part of the heat kernel about the leading term of $t^{-2/3}$. Furthermore, as for the self-similar gaskets of Section \ref{gaskets}, the local fluctuations in the measure and heat kernel are shown to be of log-logarithmic order. These results are closely related to those discussed in the introduction for the incipient infinite cluster of critical percolation on the binary tree. In particular, this random graph, when rescaled, converges in distribution to a random set known as the self-similar continuum random tree, which is made up of a Poissonian collection of independent continuum random trees (see \cite{Aldous2}).

\def\cprime{$'$}
\providecommand{\bysame}{\leavevmode\hbox to3em{\hrulefill}\thinspace}
\providecommand{\MR}{\relax\ifhmode\unskip\space\fi MR }
\providecommand{\MRhref}[2]{%
  \href{http://www.ams.org/mathscinet-getitem?mr=#1}{#2}
}
\providecommand{\href}[2]{#2}

\end{document}